\tikzset{commutative diagrams/.cd}
\theoremstyle{theorem}
\newtheorem{theorem}{Theorem}[section]
\newtheorem{corollary}[theorem]{Corollary}
\newtheorem{lemma}[theorem]{Lemma}
\newtheorem{proposition}[theorem]{Proposition}
\theoremstyle{definition}
\newtheorem{example}[theorem]{Example}
\newtheorem{conjecture}[theorem]{Conjecture}
\theoremstyle{remark}
\newtheorem{remark}[theorem]{Remark}
\newtheorem*{warning}{Warning}
\numberwithin{equation}{section}
\def\Z{{\mathbb Z}}
\def\Q{{\mathbb Q}}
\def\C{{\mathbb C}}
\def\R{{\mathbb R}}
\def\Fp{{{\mathbb F}_p}}
\def\End{{\mathrm{End}}}
\def\subeq{\subseteq}
\def\bbar{\overline}
\def\incl{\hookrightarrow}
\def\bbH{{  \mathbb{H}  }}
\def\diag{{  \mathrm{diag}  }}
\begin{document}
\title{Unit equations on quaternions}
\author{Yifeng Huang}
\thanks{We thank Jason Bell, Dragos Ghioca, Tom Tucker and Michael Zieve for helpful conversations and useful comments. We thank the referee for the time devoted to reading the paper to its details, for valuable suggestions to the paper and \thref{conjecture}, and especially for generously sketching the proof of \thref{allcommutative} and \thref{virtualabelian} and informing the author about numerous useful results. This work was done with the support of Rackham One-term Dissertation Fellowship, Indu and Gopal Prasad Family Fund, and National Science Foundation grant DMS-1601844.}

\bibliographystyle{abbrv}

\begin{abstract}
A classical result about unit equations says that if $\Gamma_1$ and $\Gamma_2$ are finitely generated subgroups of $\C^\times$, then the equation $x+y=1$ has only finitely many solutions with $x\in\Gamma_1$ and $y\in \Gamma_2$. We study a noncommutative analogue of the result, where $\Gamma_1,\Gamma_2$ are finitely generated subsemigroups of the multiplicative group of a quaternion algebra. We prove an analogous conclusion when both semigroups are generated by algebraic quaternions with norms greater than $1$ and one of the semigroups is commutative. As an application in dynamics, we prove that if $f$ and $g$ are endomorphisms of a curve $C$ of genus $1$ over an algebraically closed field $k$, and $\deg(f), \deg(g)\geq 2$, then $f$ and $g$ have a common iterate if and only if some forward orbit of $f$ on $C(k)$ has infinite intersection with an orbit of $g$.
\end{abstract}
\maketitle

\section{Introduction}

A classical result about unit equations states that the equation $f+g=1$ has only finitely many solutions in a given finitely generated semigroup $\Gamma$ in $K^\times$, where $K$ is a field of characteristic zero. Unit equations have had important applications in many areas of mathematics, including Diophantine geometry (\cite{hindry2013diophantine,lang1960integral}), arithmetic dynamics \cite[p.\ 291]{evertse2015unit} and variants of the Mordell--Lang conjecture (for instance, see \cite[p.\ 321]{evertse2015unit}). Extensions of the classical result have also been studied, for example, see \cite{koymans2017equation,voloch1998} in the characteristic $p$ setting. 

 In this paper we present a class of semigroups in the standard quaternion algebra over $\R$ for which the finiteness of solutions of the unit equation holds. This is the first analogous result in the noncommutative setting. In light of the many applications of unit equations, this raises the intriguing possibility that some of those applications might have noncommutative analogues. 

Let $\bbH=\R\oplus \R i\oplus \R j\oplus \R k$ denote the quaternion algebra $\bbH$ over $\R$, with the standard multiplication law $i^2=j^2=k^2=-1, ij=-ji=k, jk=-kj=i, ki=-ik=j$. For an element $\alpha=a+bi+cj+dk\in \bbH$, where $a,b,c,d\in \R$, define its conjugation to be $\bbar\alpha=a-bi-cj-dk$, its norm to be $N(\alpha)=\alpha\bbar \alpha=\bbar\alpha \alpha=a^2+b^2+c^2+d^2$, and its trace $tr(\alpha)=\alpha+\bbar \alpha=2a$. Write $\lvert\alpha\rvert=\sqrt{N(\alpha)}$. 

We say that a quaternion $\alpha=a+bi+cj+dk\in \bbH$ is \textit{algebraic} if all coordinates $a,b,c,d$ are algebraic over $\Q$. This is equivalent to requiring that $\alpha$ satisfies a polynomial equation with coefficients in $\Q$, or that $\Q[\alpha]$ is a finite field extension of $\Q$. Indeed, $\alpha$ always satisfies the quadratic equation
\[X^2-tr(\alpha)X+N(\alpha)=0\]
and if $a,b,c,d\in \bbar\Q$, then so are $tr(\alpha)$ and $N(\alpha)$.

Denote by $\bbH_a$ the subalgebra of all quaternions that are algebraic.

\begin{theorem}\thlabel{main}
Let $\Gamma_1,\Gamma_2$ be semigroups of\/ $\bbH_a^\times$ generated by finitely many elements of norms greater than $1$, and fix $a,a',b,b'\in \bbH_a^\times$. If\/ $\Gamma_1$ is commutative, then the equation
\[
afa'+bgb'=1
\]
has only finitely many solutions with $f\in \Gamma_1$ and $g\in \Gamma_2$. In fact, for solutions $(f,g)$, we have effectively computable upper bounds for $\lvert f\rvert, \lvert g\rvert$ that depend only on $a,a',b,b'$ and generators of\/ $\Gamma_1,\Gamma_2$. 
\end{theorem}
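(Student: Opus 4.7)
My plan is to reduce the problem, via norm-taking in $\bbH$, to a linear form in logarithms to which Baker's theorem applies. First, by inner conjugation in $\bbH^\times$ I would arrange that $\Gamma_1 \subset \R[i] \cong \C$: since the commutative $\R$-subalgebra of $\bbH$ generated by $\Gamma_1$ sits inside the division algebra $\bbH$, it is isomorphic to $\R$ or to $\C$; in the first case $\Gamma_1 \subset \R$ and the claim reduces to the classical unit equation theorem over $\bbar\Q$, so we may assume the second. Conjugation preserves $|f|, |g|$ and the form of the equation. Next, taking reduced norms of $bgb' = 1 - afa'$ using $N(1-x) = 1 - \operatorname{tr}(x) + N(x)$ and computing $\operatorname{tr}(afa') = 2\Re(c_1 f)$ (valid because $fj = j\bar f$ for $f \in \C$, where $c_1 \in \C$ is the $\C$-component of $a'a$ in the decomposition $\bbH = \C \oplus \C j$), one obtains
\[
\beta |g|^2 - \alpha |f|^2 = 1 - 2\Re(c_1 f), \qquad \alpha := |a|^2|a'|^2,\ \beta := |b|^2|b'|^2. \tag{$\star$}
\]

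The main Diophantine step applies Baker's theorem on linear forms in logarithms. For large $|f|$, ($\star$) yields $|g|^2/|f|^2 - \alpha/\beta = O(1/|f|)$, so $\Lambda := \log(|g|^2/|f|^2) - \log(\alpha/\beta) = O(1/|f|)$. Writing $f = \prod_i f_i^{n_i}$ and $|g|^2 = \prod_j |g_j|^{2m_j}$ in terms of the given generators of $\Gamma_1, \Gamma_2$, the expression $\Lambda$ is a linear form in the logarithms of the algebraic numbers $|f_i|, |g_j|, \alpha, \beta$ with integer coefficients of magnitude at most $N := \sum_i n_i + \sum_j m_j$. Provided $\Lambda \neq 0$, Baker's theorem gives $|\Lambda| \geq C N^{-\kappa}$ with effectively computable $C, \kappa > 0$. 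Since all generators have norm $> 1$, $|f| \geq \delta^{\sum_i n_i}$ for some $\delta > 1$, and ($\star$) forces $\sum_i n_i$ and $\sum_j m_j$ to be comparable, so $\sum_i n_i \geq c N$ for some $c > 0$. Matching the upper bound $|\Lambda| \leq O(\delta^{-cN})$ against the lower bound $|\Lambda| \geq C N^{-\kappa}$ yields an effective upper bound for $N$, hence for $|f|$ and $|g|$. Finiteness of solutions then follows, because each $\Gamma_i$, being generated by finitely many elements of norm $> 1$, has only finitely many elements of bounded norm, and because $g$ is determined by $f$ via $g = b^{-1}(1 - afa')(b')^{-1}$.

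The main obstacle I foresee is the degenerate case $\Lambda = 0$, i.e.\ $|g|^2/|f|^2 = \alpha/\beta$ exactly. By ($\star$) this forces $\Re(c_1 f) = 1/2$, that is $c_1 f + \bar c_1 \bar f = 1$. If $c_1 = 0$ this is impossible; otherwise, this is a classical two-variable $S$-unit equation $x + y = 1$ with $x = c_1 f$ and $y = \bar c_1 \bar f$ in the finitely generated multiplicative subgroup $\langle c_1, \bar c_1, \Gamma_1, \bar\Gamma_1\rangle \subset \bbar\Q^\times$, and Evertse's theorem (with effective bounds by Baker) gives finitely many solutions. The technical heart of the argument is thus the Baker step, particularly producing an effective lower bound on $|\Lambda|$ that beats the upper bound $O(\delta^{-cN})$ after carefully tracking the comparability between $|f|$ and $|g|$.
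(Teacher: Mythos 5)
Your proof is correct and arrives at the same dichotomy as the paper (between the generic case where $\lvert 1-afa'\rvert\neq\lvert afa'\rvert$ and the exceptional case of equality, which in your notation is $\Lambda\neq 0$ versus $\Lambda=0$), but it handles the exceptional case by a genuinely different and slicker reduction. The paper splits the argument into two auxiliary theorems: a ``reduction'' step that applies Baker to the archimedean norms (essentially your $\Lambda\neq 0$ analysis, phrased via the triangle inequality and the mean value theorem rather than via your identity $(\star)$), and a ``commutative'' step that treats $\lvert 1-afa'\rvert=\lvert afa'\rvert$ as the condition that $f$ lies on a hyperplane not through the origin, passes to a copy of $\C$, writes $f$ in polar form, and runs Baker a second time on a linear form in $\pi$ and the arguments $\theta_j$ of the generators. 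You instead note that after conjugating $\Gamma_1$ into $\C$, the exceptional case $\Lambda=0$ forces $\mathrm{tr}(afa')=1$, i.e.\ $c_1 f+\bar c_1\bar f=1$, which is a two-term $S$-unit equation in a finitely generated subgroup of $\overline{\Q}^\times$; citing the effective version of that theorem (itself proved by Baker) finishes the job. This is a cleaner packaging of the same idea — the paper's hyperplane condition $\langle v,f\rangle=M$ is literally your equation $c_1f+\bar c_1\bar f=1$ after rescaling, and the paper's angle argument is one standard route to the effective two-term $S$-unit theorem — so the two proofs use the same machinery at bottom, but yours factors cleanly through the classical commutative result rather than re-deriving it. Two small points worth attending to: the aside that the case $\Gamma_1\subset\R$ ``reduces to the classical unit equation'' is not correct as stated (the equation $aa'f+bgb'=1$ is still a quaternion equation); it is better simply to observe that $\R\subset\C$ so the $\C$ embedding covers this case, and your main argument then handles it (the degenerate equation becomes $2\Re(c_1)f=1$, one value of $f$). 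And the conjugating quaternion $q$ needs to be taken algebraic so that the new $a,a',\Gamma_1$ stay in $\bbH_a$; this is true (take $q=v/|v|+i$ where $v$ is the imaginary part of a generator, noting $|v|$ is algebraic) but should be said.
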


We emphasize that even though $\Gamma_1$ is commutative, the semigroup $\Gamma_2$ need not be commutative, and that $a,a'$ and $\Gamma_1$ typically will not commute with each other. The proof relies on the following result, which implies that if a certain quaternion unit equation has infinitely many solutions, then so does another equation of a different type. We note that \thref{reduction} applies in greater generality than \thref{main}, as \thref{reduction} does not require $\Gamma_1$ to be commutative. 

\begin{theorem}\thlabel{reduction}
Let $\Gamma_1,\Gamma_2$ be semigroups of $\bbH_a^\times$ generated by finitely many elements of norms greater than $1$, and fix $a,a',b,b'\in \bbH_a^\times$. Then the equation
\[afa'+bgb'=1\]
has only finitely many solutions with $f\in \Gamma_1$ and $g\in \Gamma_2$ such that $\lvert 1-afa'\rvert\neq |afa'|$. In fact, for such pairs $(f,g)$, we have effectively computable upper bounds for $\lvert f\rvert, \lvert g\rvert$ that depend only on $a,a',b,b'$ and generators of\/ $\Gamma_1,\Gamma_2$.
\end{theorem}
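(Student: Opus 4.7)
The plan is to reduce the problem to a nonvanishing linear form in logarithms of algebraic numbers and invoke Baker's theorem. Write $A=afa'$, $B=bgb'$, $c_1=|a||a'|$, $c_2=|b||b'|$. By multiplicativity of the quaternion norm, $|A|^2=c_1^2 N(f)$ and $|B|^2=c_2^2 N(g)$. Expanding $|B|^2=|1-A|^2=1-tr(A)+|A|^2$ and using $|tr(A)|\leq 2|A|$ yields
\[
\bigl|c_1^2 N(f)-c_2^2 N(g)\bigr|\leq 2c_1|f|+1,
\]
so $\bigl||A|-|B|\bigr|\leq 1$. Solutions with $|f|$ bounded contribute only finitely many pairs, since then $|g|$ is also bounded, only finitely many $g\in\Gamma_2$ meet this bound, and each such $g$ determines $f$ uniquely via $f=a^{-1}(1-bgb')(a')^{-1}$. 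Hence, along any hypothetical infinite family of solutions we may assume $|f|,|g|\to\infty$, and then $|f|\asymp|g|$.

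Dividing the displayed inequality by $c_2^2 N(g)$ and taking logarithms, and using that $N(f)=\prod_i N(\alpha_i)^{e_i}$ and $N(g)=\prod_j N(\beta_j)^{d_j}$ for generators $\alpha_i$ of $\Gamma_1$ and $\beta_j$ of $\Gamma_2$ with $e_i,d_j\in\Z_{\geq 0}$, we obtain
\[
|\Lambda|=O(|f|^{-1}),\qquad \Lambda:=2\log c_1-2\log c_2+\sum_i e_i\log N(\alpha_i)-\sum_j d_j\log N(\beta_j).
\]
Since $c_1^2=N(a)N(a')$, $c_2^2=N(b)N(b')$, $N(\alpha_i)$, and $N(\beta_j)$ are positive algebraic numbers, $\Lambda$ is a linear form in logarithms of algebraic numbers with integer coefficients. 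The hypothesis $|1-afa'|\neq|afa'|$ is precisely $|A|\neq|B|$, equivalently $c_1^2 N(f)\neq c_2^2 N(g)$, which is exactly $\Lambda\neq 0$.

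Baker's theorem on linear forms in logarithms now yields an effective lower bound $|\Lambda|\geq C\cdot B^{-\kappa}$, where $B:=\max_{i,j}(e_i,d_j)$ and $C,\kappa>0$ depend effectively only on the generators of $\Gamma_1,\Gamma_2$ and on $a,a',b,b'$. Combined with the upper bound $|\Lambda|=O(|f|^{-1})$, this forces $|f|\leq C'B^{\kappa}$. Since each $N(\alpha_i)>1$, $|f|^2=N(f)$ grows at least exponentially in $\max_i e_i$, and the comparability $|f|\asymp|g|$ forces $\max_i e_i\asymp\max_j d_j\asymp B$. The resulting exponential-versus-polynomial estimate bounds $B$, and hence $|f|$ and $|g|$, by an effective constant, contradicting $|f|\to\infty$ and giving the stated finiteness with the claimed effective bounds. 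The main obstacle---and the reason the condition $|1-afa'|\neq|afa'|$ is essential---is that the argument requires $\Lambda\neq 0$; if $\Lambda$ vanished along an infinite subfamily of solutions (the resonant case $|A|=|B|$), Baker's theorem would not apply, and indeed this resonant case is what \thref{reduction} deliberately leaves open and is subsequently addressed in \thref{main} using the commutativity of $\Gamma_1$.
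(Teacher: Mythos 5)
Your proposal is correct and follows essentially the same strategy as the paper's proof: the triangle inequality gives $\bigl||A|-|B|\bigr|\le 1$, taking logarithms produces a nonzero linear form in logarithms of algebraic norms (nonzero precisely because of the hypothesis $|1-afa'|\neq|afa'|$), Baker--W\"ustholz gives a polynomial lower bound, and comparing this with the exponential decay of $|\Lambda|$ in the exponent vector forces an effective bound on $|f|$ and $|g|$. The only differences from the paper are cosmetic (working with $N(\cdot)=|\cdot|^2$ rather than $|\cdot|$, and organizing the comparability estimates slightly differently).
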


Given \thref{reduction}, in order to prove \thref{main} it suffices to prove the next result which involves only the semigroup
$\Gamma_1$:

\begin{theorem}\thlabel{commutative}
Let $\Gamma$ be a semigroup generated by finitely many elements in $\bbH_a$ with norms greater than 1, and fix $a,a'\in \bbH_a^\times$. If\/ $\Gamma$ is commutative, then the equation
\[\lvert 1-afa'\rvert=\lvert afa'\rvert\]
has only finitely many solutions with $f\in \Gamma$. In fact, for solutions $f\in \Gamma$, we have an effectively computable upper bound for $\lvert f\rvert$ that depends only on $a,a'$ and generators of\/ $\Gamma$.
\end{theorem}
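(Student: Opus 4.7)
The plan is to exploit commutativity to collapse the problem into a classical unit equation over $\bbar{\Q}$.

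In $\bbH$, the centralizer of any non-real quaternion is the two-dimensional subfield it generates over $\R$, which is isomorphic to $\C$. Consequently, pairwise commutativity of the generators of $\Gamma$ forces the whole semigroup into a common commutative subfield: either $\Gamma\subset\R$, or, after fixing a non-real generator and rescaling, $\Gamma \subset \R[\zeta]\cong\C$ for some $\zeta \in \bbH$ with $\zeta^2 = -1$. The real case is handled directly, since for $f\in\R$ we have $afa' = f(aa')$, so $\mathrm{Re}(afa') = \mathrm{Re}(aa')\,f$ has at most one solution. I focus on the nontrivial case $\Gamma \subset \R[\zeta]$.

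I rewrite $|1-afa'|=|afa'|$ as the real affine condition $\mathrm{Re}(afa') = 1/2$. Decomposing $\bbH = \C \oplus \C\eta$ with $\eta^2 = -1$ and $\eta c = \bar c\eta$ for $c \in \C$, and writing $a = a_1 + a_2\eta$ and $a' = a'_1 + a'_2\eta$ with $a_i, a'_i \in \bbar{\Q}\cap\C$, a short computation using $\eta f = \bar f\eta$ shows that the scalar part of $afa'$ equals $\mathrm{Re}(\gamma f)$ where
\[
\gamma \;:=\; a_1 a'_1 - \overline{a_2}\, a'_2 \;\in\; \bbar{\Q}.
\]
The equation thus becomes $\gamma f + \overline{\gamma f} = 1$, which has no solutions if $\gamma = 0$.

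Assume $\gamma \ne 0$ and set $x := \gamma f$ and $y := \overline{\gamma f}$, so that $x + y = 1$, with $x$ in the finitely generated subgroup $H \subset \bbar{\Q}^\times$ generated by $\gamma$ together with the generators of $\Gamma$, and $y \in \overline{H}$. An effective form of the classical unit equation theorem over number fields, obtained via Baker's method on linear forms in logarithms (as developed by Győry and Evertse), then bounds the heights — hence the absolute values — of any such $x$ effectively in terms of the generators of $H$, yielding the required effective bound on $|f| = |x|/|\gamma|$. The main obstacle is precisely in this last step: qualitative finiteness for $x+y=1$ with $x,y$ in finitely generated groups is Evertse-Schlickewei-Schmidt, but the explicit height bound promised by \thref{commutative} requires invoking the stronger Baker-type effective version, and checking that all quantities involved (the field of definition of $H$, the generators of $H$ and $\overline H$, and the height of $\gamma$) are effectively computable from $a$, $a'$, and the generators of $\Gamma$.
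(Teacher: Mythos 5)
Your proof is correct, and it takes a genuinely different route from the paper's after a common first step. Both arguments begin by using commutativity to place $\Gamma$ inside a copy of $\C$ in $\bbH$ (the centralizer of any non-real generator) and by translating $\lvert 1-afa'\rvert=\lvert afa'\rvert$ into a real-linear constraint on $f$. From there the paths diverge. The paper writes $f=g_1^{n_1}\cdots g_s^{n_s}$ in polar form, rewrites the constraint as $\cos(\sum n_j\theta_j-\theta)=Mr_1^{-n_1}\cdots r_s^{-n_s}$, and applies Baker's theorem directly to the resulting inhomogeneous linear form in the angles $\pi,\theta,\theta_j$ (each $i\theta$, $i\theta_j$ being a logarithm of an algebraic number). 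You instead carry out a $\C\oplus\C\eta$ decomposition of $a$ and $a'$, compute that the scalar part of $afa'$ equals $\mathrm{Re}(\gamma f)$ with $\gamma=a_1a_1'-\bbar{a_2}a_2'\in\bbar\Q$ (a computation I checked; it is correct), and thereby recast the constraint as a genuine two-term unit equation $x+y=1$ with $x=\gamma f$ in a finitely generated subgroup $H\subset\bbar\Q^\times$ and $y=\bbar{\gamma f}$ in the conjugate group; you then invoke the effective $S$-unit theorem. Your route is more conceptual and has the pleasing feature of showing that this case of the noncommutative unit equation literally reduces to the classical commutative one, whereas the paper's route is more self-contained, extracting the needed Diophantine inequality directly from Baker without passing through the $S$-unit machinery. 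Of course the effective $S$-unit bound is itself a consequence of Baker's theorem, so the two proofs ultimately rest on the same foundation; and you are right to flag that obtaining the \emph{effective} bound on $\lvert f\rvert$ in your approach requires the Baker--Gy\H{o}ry-type effective version of the unit equation rather than the merely qualitative Evertse--Schlickewei--Schmidt statement, together with the (routine) check that the number field containing $H$, its generators, and $\gamma$ are all effectively computable from $a,a'$ and the generators of $\Gamma$. You also correctly dispose of the degenerate cases $\Gamma\subset\R$ and $\gamma=0$.
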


We remark that \thref{commutative} is the only step in the proof of \thref{main} that uses the commutativity of $\Gamma_1$, so any generalization of \thref{commutative} would immediately yield a generalization of \thref{main}. 

In light of the above results, we make the following conjecture about noncommutative unit equations:

\begin{conjecture}\thlabel{conjecture} Let $\Gamma_1,\Gamma_2$ be finitely generated semigroups of the multiplicative group $A^\times$ of a finite dimensional division algebra $A$ over $\Q$. Then for any fixed $a,a',b,b'\in A^\times$, the unit equation $afa'+bgb'=1$ has only finitely many solutions with $f\in \Gamma_1$ and $g\in \Gamma_2$.

Moreover, there is an effectively computable finite subset $S\subeq \Gamma_1\times\Gamma_2$ in terms of $a,a',b,b'$ and generators of\/ $\Gamma_1,\Gamma_2$, such that all solutions $(f,g)\in \Gamma_1\times\Gamma_2$ must lie in $S$.
\end{conjecture}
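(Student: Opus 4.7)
The plan is to attack the conjecture in three phases: an analytic reduction to an equality of norms, the commutative case, and the genuinely noncommutative case.

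First, I would embed $A$ into a matrix algebra via Wedderburn's theorem. Writing $Z = Z(A)$ for the center (a number field) and $n^2 = \dim_Z A$, we have $A \otimes_Z \bbar Z \cong M_n(\bbar Z)$, and fixing any archimedean place of $Z$ yields a submultiplicative norm $|\cdot|$ on $A$ via an operator norm on $M_n(\C)$. The proof of \thref{reduction} is essentially an analytic argument depending only on submultiplicativity and the triangle inequality, so it should extend with minor modifications. The subtlety is that the hypothesis that generators have norm greater than $1$ is dropped in the conjecture, so one must separately handle elements of norm $\le 1$ (these sit in a bounded subset of $A$ after embedding, where classical finiteness arguments should suffice). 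It then suffices to bound solutions satisfying $|1 - afa'| = |afa'|$.

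Second, for commutative $\Gamma_1$, I would exploit that $\Q[\Gamma_1]$ is a commutative subalgebra of the division algebra $A$, hence a number field $L_1$. In the quaternion case, the condition $|1-afa'|=|afa'|$ is equivalent to the linear equation $\mathrm{tr}(afa')=1$; analogously in the general setting, using the reduced trace, the condition becomes a single linear equation cutting out an affine hyperplane $H \subset L_1$. Finiteness of $\Gamma_1 \cap H$ should follow from classical results on intersections of finitely generated multiplicative semigroups with affine subspaces in number fields, via the Evertse--van der Poorten--Schlickewei theorem on $S$-unit equations applied inside $L_1$. This generalizes \thref{commutative} and presumably gives the paper's \thref{allcommutative}.

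Third, and as the main obstacle, noncommutative $\Gamma_1$ must be handled. By the Tits alternative applied to the group generated by $\Gamma_1$ inside the reductive group $A^\times$, either $\Gamma_1$ is virtually solvable or it contains a free subsemigroup of rank two. In the virtually solvable case, the identity component of the Zariski closure of $\langle \Gamma_1 \rangle$ is a torus in $A^\times$ (since $A$ being a division algebra forces $A^\times$ to be anisotropic modulo its center), so $\Gamma_1$ is virtually abelian; decomposing into finitely many cosets of a commutative subsemigroup of finite index reduces each piece to phase two, and should yield the paper's \thref{virtualabelian}. The truly hard case is when $\Gamma_1$ contains a free subsemigroup: here neither classical unit equations nor commutative Diophantine geometry apply directly, and one likely needs a noncommutative analogue of the Subspace Theorem, or a new dynamical tool exploiting the $\Gamma_1$-action on an associated symmetric space or Bruhat--Tits building. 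This is where I expect the main obstacle to lie, which explains why the result is posed only as a conjecture.
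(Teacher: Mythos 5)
This statement is a conjecture, not a theorem, so there is no proof of it in the paper to compare against; the paper proves only the special cases recorded as \thref{main}, \thref{allcommutative}, and (via \thref{virtualabelian}) the case where no $\Gamma_i$ contains a free subsemigroup of rank two. You correctly recognize that the full conjecture is open, and your three-phase roadmap tracks the paper's actual structure fairly closely: phase one mirrors \thref{reduction}, phase two overlaps with \thref{commutative} and \thref{allcommutative}, and your Tits-alternative dichotomy in phase three is in spirit the appendix's \thref{virtualabelian} (the paper uses the Okni\'nski--Salwa semigroup version of the alternative and a Lie--Kolchin commutator argument rather than anisotropy to pass from virtually solvable to virtually abelian, but these lead to the same place). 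Your identification of the free-subsemigroup case as the real obstruction is exactly what the paper itself emphasizes.

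Two substantive technical issues with the sketch. First, your assertion that the norm-equality condition $\lvert 1-afa'\rvert=\lvert afa'\rvert$ becomes ``a single linear equation via the reduced trace'' in general does not survive outside $\bbH$. For the Hamilton quaternions the identity $\lvert x\rvert^2=x\bbar x$ makes $\lvert 1-x\rvert^2-\lvert x\rvert^2=1-\mathrm{tr}(x)$ linear in $x$, and this is precisely what \thref{commutative} uses. But for a degree-$n$ division algebra the reduced norm is a degree-$n$ form with no analogous quadratic identity, and an operator norm coming from an archimedean embedding $A\incl M_n(\C)$ is neither multiplicative nor an algebraic function of the coordinates, so the locus $\lvert 1-x\rvert=\lvert x\rvert$ is not an affine hyperplane. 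The paper's \thref{allcommutative} avoids this entirely and uses a quite different strategy: embed $A\incl M_n(K)$, simultaneously diagonalize each abelian $\Gamma_i$, read off the matrix equation entrywise, and apply the several-variable $S$-unit theorem together with a pigeonhole argument. Second, dropping the hypothesis that generators have norm greater than $1$ is more delicate than you indicate: \thref{reduction} concludes finiteness from boundedness precisely because a semigroup generated by elements of norm greater than $1$ has only finitely many elements of bounded norm, and a general finitely generated semigroup may contain an infinite unit or torsion part in a bounded region, so ``bounded $\Rightarrow$ finite'' fails without a separate argument for that part.
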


The referee kindly points out that the ineffective part of the conjecture is true in the case where all the semigroups are commutative:

\begin{proposition}\thlabel{allcommutative}
Let $k$ be a field of characteristic zero and $A$ be a finite-dimensional division algebra over $k$. Let $\Gamma_1,\dots,\Gamma_m$ be abelian and finitely generated subgroups of the multiplicative group $A^\times$. Then for any fixed $a_1,\dots,a_m,b_1,\dots,b_m\in A^\times$, the unit equation
\[a_1 f_1 b_1+\cdots+a_m f_m b_m=1\]
has only finitely many \emph{nondegenerate} solutions $(f_1,\dots,f_m)\in \Gamma_1\times\dots\times\Gamma_m$, i.e., solutions such that no proper subsum equals $1$.
\end{proposition}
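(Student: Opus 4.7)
The plan is to reduce to a classical commutative $S$-unit equation and apply the Evertse--Schlickewei--Schmidt theorem. The leverage comes from the hypothesis that each $\Gamma_i$ is abelian: I first observe that $K_i := k[\Gamma_i]$ is a commutative, finite-dimensional $k$-subalgebra of the division algebra $A$, hence a commutative domain, hence a finite field extension of $k$. So each $\Gamma_i$ sits inside the multiplicative group of the finite extension $K_i/k$, even though the different $K_i$ need not commute with one another inside $A$.

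Next, I linearize using traces. Fix any $k$-linear functional $\tau\colon A \to k$ with $\tau(1) \neq 0$, for instance the composition of the reduced trace $A \to Z(A)$ with $\mathrm{Tr}_{Z(A)/k}$. Applying $\tau$ to the given equation yields $\sum_{i=1}^m \tau(a_i f_i b_i) = \tau(1)$. For each $i$, the map $x \mapsto \tau(a_i x b_i)$ is a $k$-linear functional on $K_i$, so by the non-degeneracy of the trace pairing on the separable extension $K_i/k$ there is a unique $d_i \in K_i$ satisfying $\tau(a_i x b_i) = \mathrm{Tr}_{K_i/k}(d_i x)$. Expanding each trace over the embeddings $\sigma\colon K_i \hookrightarrow \bbar k$ transforms the original equation into a scalar identity in $\bbar k$:
\[
\sum_{i=1}^m \sum_{\sigma\colon K_i \hookrightarrow \bbar k} \sigma(d_i)\,\sigma(f_i) = \tau(1),
\]
a linear equation whose unknowns $\sigma(f_i)$ lie in the finitely generated subgroup $G \leq \bbar k^\times$ generated by all the $\sigma(\Gamma_i)$.

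After discarding terms with $\sigma(d_i) = 0$ and dividing by $\tau(1)$, this is a generalized $S$-unit equation $\sum_j c_j x_j = 1$ with $x_j \in G$, to which the Evertse--Schlickewei--Schmidt theorem assigns finitely many solutions with no vanishing proper subsum. The main obstacle is that the nondegeneracy hypothesis on the original equation (no proper subsum of the $m$ terms $a_i f_i b_i$ equals $1$) does not directly imply $S$-unit nondegeneracy of the expanded tuple $(\sigma(f_i))_{i,\sigma}$, since the expansion produces $\sum_i [K_i:k]$ scalar terms rather than $m$. I would handle this by induction on $m$: the base case $m=1$ forces the unique solution $f_1 = a_1^{-1} b_1^{-1}$, and for the inductive step any $S$-unit-degenerate tuple arising from a nondegenerate original solution must satisfy a nontrivial homogeneous relation $\sum_{(i,\sigma) \in T} \sigma(d_i)\sigma(f_i) = 0$. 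Varying $\tau$ across a basis of $A^*$ (which collectively recovers the original equation in $A$) together with the original nondegeneracy should force such a relation to pin down a proper subset of indices $i$ for which the problem reduces to a smaller unit equation, handled by the inductive hypothesis. This combinatorial bookkeeping between original nondegeneracy and $S$-unit nondegeneracy is the delicate portion of the argument.
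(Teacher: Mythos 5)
Your overall route is essentially the same as the paper's, but you travel it with slightly different coordinates, and the detour you propose for the ``delicate'' bookkeeping step is where the argument goes astray.

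Both you and the paper reduce to a scalar $S$-unit equation over the algebraic closure and then invoke the several-variable $S$-unit theorem. The paper does this by embedding $A \hookrightarrow M_n(K)$ ($K = \bbar k$), simultaneously diagonalizing each abelian $\Gamma_i$, and reading off the $(1,1)$-entry of the equation; you instead view $K_i := k[\Gamma_i]$ as a finite separable extension, apply a trace functional $\tau$, and expand the resulting field traces over embeddings $\sigma\colon K_i \hookrightarrow \bbar k$. These are two descriptions of the same thing: the diagonal entries in the paper's picture are precisely the embeddings in yours. Your framing has one genuine small advantage: the crucial fact that knowing a single scalar coordinate $\sigma_0(f_{i_0})$ determines $f_{i_0}$ is immediate for you (an embedding of a field is injective), whereas the paper derives it from the division-algebra property ($g-1$ is either zero or a unit). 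Note that this injectivity step is really the heart of the matter — it is what lets a scalar $S$-unit conclusion be promoted to a statement about the $f_i$ themselves — and you should state it explicitly rather than leave it implicit.

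The gap is in your treatment of degeneracy. You propose to look at \emph{vanishing} proper subsums $\sum_{(i,\sigma)\in T}\sigma(d_i)\sigma(f_i)=0$, normalize, and use that relation to ``pin down a proper subset of indices.'' This does not work: a homogeneous vanishing relation, after dividing by one term, only constrains \emph{ratios} of the $\sigma(f_i)$, not their values, so it cannot fix any $f_i$. Varying $\tau$ over a basis of $A^{*}$ does not rescue this, since each choice of $\tau$ changes the coefficients $\sigma(d_i)$ and hence which subsums vanish; it gives no uniform leverage on an individual $f_i$. What you actually want — and what the paper does — is to take a \emph{minimal nonempty} subset $T_0$ of indices with $\sum_{(i,\sigma)\in T_0}\sigma(d_i)\sigma(f_i)=1$ (such a $T_0$ exists because the full sum is $1$, and minimality forces no vanishing proper sub-subsum, so the $S$-unit theorem applies to this restricted inhomogeneous equation and gives finitely many \emph{values}, not ratios). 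Then pigeonhole over the finitely many possible $T_0$ and the finitely many possible values to extract infinitely many original solutions with the same $f_{i_0}$; since by nondegeneracy $1 - a_{i_0}f_{i_0}b_{i_0}$ is a nonzero element of the division algebra, divide through and conclude by induction on $m$. With this repair, and with a single fixed $\tau$ satisfying $\tau(1)\neq 0$, your argument becomes a correct proof that is mathematically equivalent to the one in the paper.
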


This, of course, implies the case where all $\Gamma_i$ are \emph{virtually} abelian, in the sense that $\Gamma_i$ has a finite-index abelian subgroup. Moreover, as is mentioned by the referee, every semigroup that does not contain a free semigroup of rank two is contained in a virtually abelian subgroup of $A^\times$ (see \thref{virtualabelian}), so \thref{allcommutative} also holds if $\Gamma_i$ does not contain a free semigroup of rank two. We emphasize that \thref{main} is the only currently proven case of \thref{conjecture} where some of the semigroups are not contained in virtually abelian subgroups of $A^\times$. It is also the only known case where $A$ is noncommutative and one knows an effectively computable finite set that contains all the solutions.

In Section \ref{discussion}, we will discuss a possible $p$-adic approach to \thref{conjecture}, and will give a counterexample to the matrix algebra analogue of \thref{conjecture} in \thref{matrixexample}.

Our main theorem has the following consequence about intersections of orbits of endomorphisms of a genus-1 curve in arbitrary characteristic. 

\begin{corollary}\thlabel{dynamiccor}
Let $E$ be an elliptic curve over an algebraically closed field $k$, and let $f,g:E\to E$ be regular maps of degrees greater than 1. If there are points $A,B\in E(k)$ such that the forward orbits $O_f(A):=\{A,f(A),f^2(A),\ldots\}$ and $O_g(B):=\{B,g(B),g^2(B),\ldots\}$ have infinite intersection, then $f$ and $g$ have a common iterate, namely, $f^{m_0}=g^{n_0}$ for some positive integers $m_0,n_0$.

In fact, if $O_f(A)\cap O_g(B)$ is nonempty, let $m_0,n_0$ be integers such that $f^{m_0}(A)=g^{n_0}(B)$. Then there is an effectively computable constant $M$ in terms of $A, B, f, g, m_0, n_0$ such that, if $f^m(A)=g^n(B)$ for some $(m,n)$ where either $m>M$ or $n>M$, then $f^{m_0}=g^{n_0}$.

\end{corollary}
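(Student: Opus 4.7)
The plan is to reduce \thref{dynamiccor} to \thref{main} by conjugating $f,g$ into isogenies, linearizing the resulting equation on $E$ through an analytic uniformization, and applying \thref{main} to a fixed-coefficient unit equation that emerges. For Step 1, I normalize: every regular self-map of $E$ of degree $\geq 1$ has the form $P \mapsto \phi(P) + c$ for an isogeny $\phi \in \End(E)$ and a constant $c \in E(k)$; write $f(P) = \phi(P) + c_f$, $g(P) = \psi(P) + c_g$, with $\deg \phi, \deg \psi \geq 2$. Since $\phi - 1 \in \End(E)$ is a nonzero (hence surjective) isogeny, $f$ admits a fixed point $P_f \in E(k)$; similarly $g$ has a fixed point $P_g$. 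Conjugating by translations yields $f^m(P) = \phi^m(P - P_f) + P_f$, and analogously for $g^n$. Setting $\alpha := A - P_f$, $\beta := B - P_g$, and $\delta := P_g - P_f$, the condition $f^m(A) = g^n(B)$ becomes $\phi^m\alpha - \psi^n\beta = \delta$ in $E(k)$.

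For Step 2 I embed in $\bbH_a$ and linearize. The ring $\End(E)$ is $\Z$, an order in an imaginary quadratic field, or an order in a definite quaternion $\Q$-algebra (the supersingular case in positive characteristic), and in each situation embeds into $\bbH_a$ with $\deg$ corresponding to the quaternion norm. In particular $|\phi|, |\psi| > 1$, and the cyclic semigroups $\Gamma_1 := \langle\phi\rangle$ and $\Gamma_2 := \langle\psi\rangle$ are commutative, as required by \thref{main}. To transport the equation from $E(k)$ into $\bbH_a$, I use a uniformization: in characteristic $0$, fix an embedding $k \hookrightarrow \C$ and use $E(\C) = \C/\Lambda$; in characteristic $p$, use the Serre--Tate canonical lift for ordinary $E$ and the $p$-adic formal-group uniformization for supersingular $E$. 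In each case the equation lifts to
\[ \phi^m \tilde\alpha - \psi^n \tilde\beta - \tilde\delta = \lambda_{m,n}, \]
with $\lambda_{m,n}$ in the relevant covering lattice.

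For Step 3 I apply \thref{main}. A growth estimate exploiting $|\phi|, |\psi| > 1$, combined with comparison to the reference solution $(m_0, n_0)$, reduces the problem after a finite case split to the situation where $\lambda_{m,n}$ equals a fixed lattice element $\lambda_0$; setting $c := \tilde\delta + \lambda_0$, the equation reads $\phi^m \tilde\alpha - \psi^n \tilde\beta = c$. If $c \neq 0$, dividing through produces the unit equation $(\tilde\alpha/c)\phi^m + (-\tilde\beta/c)\psi^n = 1$ of the exact form in \thref{main} (with $a' = b' = 1$), so \thref{main} yields an effective bound $M$ on $|\phi^m|, |\psi^n|$ and hence on $m, n$. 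The complementary case $c = 0$ forces $\delta \in \Lambda$ (so $\delta = 0$ in $E$) and the equation to degenerate to $\phi^m\alpha = \psi^n\beta$; an elementary comparison with the reference equation then forces $\phi^{m_0} = \psi^{n_0}$ in $\End(E)$, and matching translation constants gives $f^{m_0} = g^{n_0}$ as regular maps. The main obstacle I anticipate is the normalization of $\lambda_{m,n}$ in Step 3 so that a fixed-coefficient unit equation emerges uniformly across the family of solutions; the supersingular case in positive characteristic is precisely where the noncommutative, quaternion form of \thref{main} becomes indispensable, since $\End(E)$ is then genuinely noncommutative.
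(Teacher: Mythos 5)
Your approach diverges from the paper's at a crucial point, and the divergence introduces a gap that I don't see how to repair. The paper never uniformizes $E$: it stays entirely inside $\End(E)$. After putting $f$ into endomorphism form and writing $g^n=\tau_{(h^n-1)(R)}\circ h^n$, the paper takes the two equations $f^{m_0}(P)=g^{n_0}(P)$ and $f^m(P)=g^n(P)$, hits the first with the dual isogeny $(\bbar h^{n_0}-1)$ and then with $(h^n-1)$, and uses the second to eliminate $R$. The result is that an explicit element of $\End(E)$ annihilates the infinite-order point $P$; since a nonzero endomorphism has finite kernel, that element must be zero. This produces equation (5.4) of the paper, $h^n(u+d)-f^m d=u$, with $u=(\bbar h^{n_0}-1)(f^{m_0}-h^{n_0})\in\End(E)$ and $d=\deg(h^{n_0}-1)\in\Z$ — so all coefficients are automatically algebraic quaternions, and \thref{main} applies directly.

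Your Step 2 is where things break. You lift the point equation through a uniformization and aim to apply \thref{main} to $(\tilde\alpha/c)\phi^m+(-\tilde\beta/c)\psi^n=1$ with $a=\tilde\alpha/c$, $b=-\tilde\beta/c$. But \thref{main} requires $a,b\in\bbH_a^\times$, i.e.\ \emph{algebraic}. In the complex case, $\tilde\alpha$ is an abelian logarithm of the algebraic point $A-P_f$, which is generically transcendental (indeed, if $A-P_f$ is of infinite order, it must be); the same goes for $\tilde\beta,\tilde\delta$. So the unit equation you produce has transcendental fixed coefficients, and Baker's theorem as used in \thref{main} gives you nothing. This is precisely the obstruction that the paper's algebraic elimination is designed to avoid. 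There are also subsidiary problems: in positive characteristic there is no covering-lattice uniformization of $E(k)$ for a general algebraically closed $k$ (the Serre--Tate lift and the formal group do not furnish a global additive uniformization of $E(k)$ on which $\End(E)$ acts by scalar multiplication, and in the supersingular case there is no embedding of $\End(E)$ into a $1$-dimensional object at all); and the claim that the lattice corrections $\lambda_{m,n}$ can be normalized to a single $\lambda_0$ after a finite case split is unsupported — nothing constrains $\lambda_{m,n}$ to lie in a finite set as $m,n$ grow. I recommend replacing the uniformization step with the paper's algebraic device: combine the two orbit equations, multiply by the dual isogeny to kill the translation datum, and use the infinite order of $P$ to land in an equation inside $\End(E)\otimes\Q\hookrightarrow\bbH_a$ with genuinely algebraic coefficients.
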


Analogous results have been proven in various cases in characteristic zero, in case $E$ is replaced by $\mathbb{A}^1$ \cite{ghioca2012linear}, a linear space \cite{ghioca2017orbit}, or a semiabelian variety \cite{ghioca2017orbit,ghioca2011mordell}. \thref{dynamiccor}, however, applies to all characteristics. 

It would be interesting to study high-dimensional analogues of \thref{dynamiccor}. For instance, we will show that if certain cases of \thref{conjecture} hold, then \thref{dynamiccor} remains true if $E$ is replaced by a \emph{simple} abelian variety, i.e., an abelian variety having no nonzero proper abelian subvarieties. The referee's \thref{allcommutative} thus yields an unconditional proof of the ineffective part of the simple abelian variety analogue of \thref{dynamiccor}.

\begin{corollary}\thlabel{dynamicAV}
Let $X$ be a simple abelian variety over an algebraically closed field $k$, and let $f,g:X\to X$ be regular maps of degrees greater than 1. If there are points $A,B\in X(k)$ such that the forward orbits $O_f(A):=\{A,f(A),f^2(A),\ldots\}$ and $O_g(B):=\{B,g(B),g^2(B),\ldots\}$ have infinite intersection, then $f$ and $g$ have a common iterate, namely, $f^{m_0}=g^{n_0}$ for some positive integers $m_0,n_0$.
\end{corollary}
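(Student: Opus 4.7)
The plan is to reduce the dynamical statement to a noncommutative unit equation in the division algebra $D := \End(X)\otimes \Q$ and then invoke \thref{allcommutative}. Because $X$ is simple, $D$ is a finite-dimensional division algebra over $\Q$, and every nonzero element of $\End(X)$ is an isogeny. Every regular self-map of an abelian variety factors as a translation composed with a group endomorphism, so write $f(x) = \phi(x) + u$ and $g(x) = \psi(x) + v$ with $\phi,\psi \in \End(X)$ and $u,v \in X(k)$. The hypothesis $\deg f,\deg g\ge 2$ forces $\phi,\psi\ne 1$, so $1-\phi$ and $1-\psi$ are also isogenies.

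First I would translate the origin of $X$ to a fixed point of $f$, which exists because the equation $(1-\phi)x = u$ is solvable. In the new coordinates $f = \phi$ is a pure endomorphism, and $g$ acquires its own unique fixed point $w \in X(k)$. A short computation gives
\[
f^m(A) = \phi^m(A),\qquad g^n(B) = w + \psi^n(B - w),
\]
so the intersection condition becomes
\[
\phi^m(A) - \psi^n(B - w) = w
\]
inside the abelian group $X(k)$.

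The key step is to lift this into $D$. View $X(k)\otimes_\Z \Q$ as a left $D$-vector space and let $V$ be the finite-dimensional $D$-subspace spanned by the classes of $A$, $B-w$, and $w$. After disposing of the easy case in which $A$ or $B-w$ is torsion (the corresponding orbit is preperiodic and the claim follows by a finite check), pick a $D$-basis $\{P_i\}$ of $V$. Expanding the displayed equation along this basis and using that the map $\alpha\mapsto \alpha P$ is injective for any non-torsion $P$ (since nonzero endomorphisms of a simple abelian variety are isogenies, hence have finite kernel), one obtains, coordinate by coordinate, a finite system of $D$-linear equations of the form $\alpha\phi^m\beta + \alpha'\psi^n\beta' = \omega$. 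Whenever some $\omega \ne 0$, dividing by $\omega$ produces a genuine unit equation in $D$, and \thref{allcommutative} applied to the abelian cyclic subgroups $\langle \phi\rangle, \langle \psi\rangle\subeq D^\times$ bounds the number of nondegenerate solutions $(m,n)$; the degenerate solutions contribute only finitely many $(m,n)$ as well, because $\phi^m$ and $\psi^n$ each have infinite order in $D^\times$ (their degrees grow exponentially). Thus an infinite intersection forces every such $\omega$ to vanish, giving a relation $\phi^m\alpha = \psi^n\gamma$ in $D$ for infinitely many $(m,n)$; dividing two of these yields $\phi^{m_0} = \psi^{n_0}$ with $m_0, n_0 > 0$. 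Subtracting two orbit-collision equations then shows that the translation parts of $f^{m_0}$ and $g^{n_0}$ also match, so $f^{m_0} = g^{n_0}$ as regular self-maps.

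The principal obstacle is the module-theoretic bridge in the third step: converting an equation in $X(k)$ into a genuine unit equation in $D$ requires care when $\dim_D V \ge 2$ and when torsion contributions (for instance, $w$ of finite order) force the analysis to be rerun after scaling the equation by an integer. The final promotion from $\phi^{m_0} = \psi^{n_0}$ to the equality $f^{m_0} = g^{n_0}$ of self-maps also needs a separate argument using the collision equations to pin down the translation parts.
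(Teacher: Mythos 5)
Your proposal is correct in outline but takes a genuinely different route from the paper.  The paper transfers its elliptic-curve argument (for \thref{dynamiccor}) to the simple abelian variety case: it works with a single point $P$ in the intersection of the two orbits, writes $g^n = \tau_{(h^n-1)(R)}\circ h^n$ with $(h-1)(R)=Q$, and eliminates the unknown translation $R$ by composing the collision equation with a pseudo-inverse $\varphi$ satisfying $\varphi\circ(h^{n_0}-1)=\deg(h^{n_0}-1)$ and using that integers are central in $\End(X)$.  This produces a single two-term unit equation $h^n(u+d)-f^m d = u$ in $D = \End(X)\otimes\Q$, to which \thref{allcommutative} is applied.  You instead regard $X(k)\otimes\Q$ as a left $D$-vector space, decompose $\phi^m(A)-\psi^n(B-w)=w$ in a $D$-basis of the finite-dimensional subspace spanned by $A$, $B-w$, $w$, and obtain a system of coordinate-wise unit equations $\phi^m a_i - \psi^n b_i = w_i$.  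This is more module-theoretic and avoids the isogeny gymnastics (you never need the pseudo-inverse $\varphi$), but it requires a case analysis over which of the coefficients $a_i, b_i, w_i$ vanish, and it requires you to track the torsion lost in $X(k)\otimes\Q$ and recover the translation-part identity $\psi^{n_0}(w)=w$ at the end.  These extra cases are all manageable (in the two-term unit equation no degenerate solutions arise at all, not just finitely many, since in a division algebra a subsum equal to $1$ would force the other term to be $0$), and your final step is better phrased as applying $\phi^{m_0}$ to one collision equation and comparing with another, rather than literally subtracting.  Both proofs ultimately invoke \thref{allcommutative} with cyclic $\Gamma_1,\Gamma_2$.
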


The characteristic zero case of \thref{dynamiccor} and \thref{dynamicAV} is an instance of the higher-rank generalization posed in \cite[Question 1.6]{ghioca2012linear} of the dynamical Mordell--Lang conjecture \cite[Chapter 3]{bell2016dynamical}; see also \cite{ghioca2017orbit}. For positive characteristic, see \cite[Chapter 13]{bell2016dynamical}. We note that the conclusions of all previous results in characteristic $p>0$ involve the more complicated possibility of $p$-automatic sequences (e.g., \cite{derksen2007skolem,ghioca2019positive}), whereas the conclusion of \thref{dynamiccor} and \thref{dynamicAV} is more rigid. This extra possibility also occurs in the positive characteristic version of the original (not dynamical) Mordell--Lang conjecture \cite{moosascanlon2004}, where it is called an ``$F$-structure'' and where examples are given to show that the possibility cannot be removed.

The rest of the paper is organized as follows. In Section \ref{bakers}, we state a known Diophantine result. Then Section \ref{dynamicproof}, \ref{reductionproof} and \ref{commutativeproof} contain proofs of \thref{dynamiccor} (together with \thref{dynamicAV}), \thref{reduction} and \thref{commutative}, respectively. The proofs are independent of one another, and can be read in any order. \thref{main} follows immediately from \thref{reduction} and \thref{commutative}. The appendix includes the referee's proof of \thref{allcommutative} and a result about semigroups not containing noncommutative free semigroups.

\section{Linear Forms in Logarithms}\label{bakers}

The proofs of \thref{reduction} and \thref{commutative} rely on the following form of Baker's theorem on Diophantine approximation of logarithms. 

\begin{theorem}[Baker, W\"ustholz \cite{baker1993logarithmic}]\thlabel{baker}
Let $\lambda_1,\dots,\lambda_r$ be complex numbers such that $e^{\lambda_i}$ are algebraic for $1\leq i\leq r$. Then there are effectively computable constants $k,C>0$ depending on $r$ and $\lambda_i$ such that

\[0<\lvert a_1\lambda_1+\cdots+a_r\lambda_r\rvert\leq kH^{-C}\]
has no solutions in $a_i\in \Z$, where $H=\max_{i=1}^r \lvert a_i\rvert$.
\end{theorem}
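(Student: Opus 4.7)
Since this is a deep transcendence statement rather than something one reproves in a short note, I will only sketch the classical Baker strategy, whose parameters the Baker--W\"ustholz refinement sharpens. The plan is to argue by contradiction: assume there exist $a_1,\ldots,a_r\in \Z$ with $H=\max|a_i|$ large and $0<|\Lambda|\leq kH^{-C}$, where $\Lambda=a_1\lambda_1+\cdots+a_r\lambda_r$ and $k,C$ are to be chosen in terms of $r$ and the $\lambda_i$, and derive a contradiction for appropriate choices.

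The first step is an auxiliary function construction. Choose parameters $L,T,S$ related by exponents depending on $r$, and, using Siegel's lemma on a suitable linear system with algebraic coefficients, produce a nonzero polynomial $P(z_0,z_1,\ldots,z_{r-1})$ of degree at most $L$ in each variable, with algebraic integer coefficients of controlled height, such that the entire function
\[F(z)=P\bigl(z,\,e^{\lambda_1 z},\ldots,e^{\lambda_{r-1} z}\bigr)\]
vanishes to order at least $T$ at each integer $z=0,1,\ldots,S-1$. The parameters are chosen to make the linear system for the coefficients of $P$ underdetermined, so a nonzero solution of small height exists.

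The second step is extrapolation. The identity $a_r\lambda_r=-\sum_{i<r}a_i\lambda_i+\Lambda$ lets one replace $e^{\lambda_r}$ by an expression in the other $e^{\lambda_i}$ up to an error of size $|\Lambda|$. Combining this with a Schwarz lemma / Cauchy integral estimate on a large disk, one shows that the values of $F$ and its low-order derivatives at integer points well beyond $S$ are smaller than the reciprocal of their denominators; since they are algebraic of controlled height, they must vanish. Iterating this amplification argument forces $F$ (and hence $P$, by a Vandermonde / interpolation nonvanishing argument in the variables $e^{\lambda_i}$) to vanish on so large a set that one contradicts $P\neq 0$.

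The hard part, and what makes the theorem genuinely deep, is the simultaneous balancing of $L,T,S$ together with careful bookkeeping of archimedean sizes and algebraic heights, so that every inequality in the construction--extrapolation loop survives and the final lower bound on $|\Lambda|$ comes out polynomial in $H$ with an effective constant and exponent. The Baker--W\"ustholz refinement recasts this in the language of the analytic subgroup theorem on commutative algebraic groups, from which explicit values of $k$ and $C$ in terms of $r$ and the (Weil) heights of the algebraic numbers $e^{\lambda_i}$ can be extracted; following that route is what I would do to obtain the effective constants asserted in the statement.
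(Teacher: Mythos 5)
The paper does not prove this statement: it is quoted as a black-box Diophantine input, attributed to Baker and W\"ustholz \cite{baker1993logarithmic}, and everything else in the paper merely invokes it. So there is no in-paper argument to compare your sketch against.

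As a description of the classical Baker machinery your outline is essentially accurate: build an auxiliary polynomial $P$ via Siegel's lemma so that $F(z)=P\bigl(z,e^{\lambda_1 z},\dots,e^{\lambda_{r-1}z}\bigr)$ has many zeros of high multiplicity; use the hypothesis that the linear form $\Lambda$ is tiny to trade $e^{\lambda_r z}$ for a product of the other exponentials; extrapolate with a Schwarz-type estimate against a Liouville lower bound to force more vanishing than $P\neq 0$ permits; and tune the parameters so the lower bound on $|\Lambda|$ comes out polynomial in $H$. Your remark that Baker--W\"ustholz rephrases this via the analytic subgroup theorem and thereby extracts explicit $k$ and $C$ is also correct. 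That said, what you have written is a roadmap and not a proof: the heart of the theorem is precisely the quantitative bookkeeping of heights, degrees, and the zero estimate (a multiplicity estimate on the relevant commutative algebraic group), and none of that is carried out. For the purposes of this paper that is fine, since the theorem is meant to be cited, not reproved; but you should be explicit that you are summarizing \cite{baker1993logarithmic} rather than offering an independent argument.
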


The effective computability of \thref{baker} implies the effective part of our results, and our proofs will yield explicit bounds in our result if we use an explicit version of \thref{baker} (for example, see \cite[\S 3.2]{evertse2015unit}). 

\section{Proof of \thref{dynamiccor} and \thref{dynamicAV}}\label{dynamicproof}

In this section, we prove \thref{dynamiccor} and a conditional generalization to simple abelian varieties, which implies \thref{dynamicAV}. 

\begin{proof}[Proof of \thref{dynamiccor}]

Since $\deg(f)>1$, the regular map $f$ has a fixed point. By replacing the origin of $E$ by a fixed point of $f$ if necessary, we may assume that $f$ is an endomorphism of $E$. 

 Write  $g = \tau_Q \circ h$  where  $Q$  is a point on $E$,  $\tau_Q$  is the map  $E\to E$  defined by translation by $Q$,  and  $h$  is an endomorphism of  $E$.  Here  $\deg(h) = \deg(g) > 1$,  so that  $h-1$  is nonconstant  and thus induces a surjective map  $E \to E$.  Let  $R$  be a point on  $E$  such that  $(h-1)(R) = Q$.  Then, for any positive integer $n$, we have
\begin{equation}\label{ss0}g^n = \tau_{Q+h(Q)+h^2(Q)+\dots+h^{n-1}(Q)} \circ h^n = \tau_{ (h^n-1)(R) } \circ h^n.
\end{equation}

Thus, for any positive integer  $m$,  the condition  $f^m = g^n$  is equivalent to the conditions that $f^m = h^n$  and  $(h^n-1)(R) = O$.

Pick the orbits of $f$ and $g$ that have infinite intersection, and let  $P$  be any point in the intersection; then the orbits  $O_f(P)$  and  $O_g(P)$  also have infinite intersection, so there are infinitely many pairs $(m,n)$ of positive integers  such that
\[f^m(P) = g^n(P) = (h^n-1)(R) + h^n(P).\]

Fix such a pair $(m_0,n_0)$,  and  let $(m,n)$  be any other pair of positive integers that satisfy the above.  Then
\begin{equation}\label{ss1}
 (f^{m_0} - h^{n_0}) (P) = (h^{n_0}-1) (R)
\end{equation}   
\begin{equation}\label{ss2}
(f^m - h^n) (P) = (h^n-1) (R)
\end{equation}

Left-multiplying (\ref{ss1}) by the dual isogeny $(\bbar h^{n_0}-1)$ of $(h^{n_0}-1)$, we get
\[ (\bbar h^{n_0}-1)(f^{m_0}-h^{n_0})(P)=\deg(h^{n_0}-1)(R)\]

Left-multiplying further by $(h^n-1)$, we get
\[(h^n-1)(\bbar h^{n_0}-1)(f^{m_0}-h^{n_0})(P)=(h^n-1)\deg(h^{n_0}-1)(R)\]

Note that $\deg(h^{n_0}-1)$ is an integer, so it is in the center of $\End(E)$. Using (\ref{ss2}), we get
\[\left((h^n-1)(\bbar h^{n_0}-1)(f^{m_0}-h^{n_0})-(f^m - h^n)\deg(h^{n_0}-1) \right)(P)=O.\]

Since $O_f(P)$ is infinite, $P$ must be a point of infinite order (otherwise, $rP=0$ for some integer $r>0$, so $O_f(P)$ lies in the finite group  $E[r]$ of $r$-torion elements). 

Hence the kernel of $(h^n-1)(\bbar h^{n_0}-1)(f^{m_0}-h^{n_0})-(f^m - h^n)\deg(h^{n_0}-1)$ contains all (infinitely many) multiples of $P$. Since the kernel of any nonzero endomorphism is a finite group, we must have

\begin{equation}\label{ss3}
(h^n-1)(\bbar h^{n_0}-1)(f^{m_0}-h^{n_0})-(f^m - h^n)\deg(h^{n_0}-1)=0
\end{equation}
and recall that this holds for infinitely many pairs $(m,n)$.

Rewrite (\ref{ss3}) as an equation in $f^m$ and $h^n$:

\begin{equation}\label{ss4}
h^n(u+d)-f^m d=u
\end{equation} 
where $u=(\bbar h^{n_0}-1)(f^{m_0}-h^{n_0}), d=\deg(h^{n_0}-1)$.

Now $\End(E)\otimes_\Z \Q$ is either $\Q$ or an imaginary quadratic field or a positive definite quaternion algebra over $\Q$, all of which can be embedded into some positive definite quaternion algebra $H$ over $\Q$. View the equation (\ref{ss4}) in $H$. 

If $u\neq 0$, then the equation $h^n(u+d)u^{-1}-f^m du^{-1}=1$ has infinitely many solutions $m,n>0$, a contradiction to \thref{main} with $a=b=1$, $a'=(u+d)u^{-1}$, $b'=-du^{-1}$, $\Gamma_1$ generated by $h$, and $\Gamma_2$ generated by $f$. Hence $u=0$, so that $(\bbar h^{n_0}-1)(f^{m_0}-h^{n_0})=0$.

But $\deg \bbar h=\deg h>1$ implies $\bbar h^{n_0}-1\neq 0$, so $f^{m_0}=h^{n_0}$.

Finally, equation (\ref{ss1}) implies $(h^{n_0}-1)(R)=O$, so $g^{n_0}=h^{n_0}=f^{m_0}$ by (\ref{ss0}).

\end{proof}

Let $X$ be a simple abelian variety, and assume that \thref{conjecture} holds for $A=\End(X)\otimes \Q$ and $\Gamma_1,\Gamma_2$ being cyclic semigroups. We claim that \thref{dynamiccor} remains true if $E$ is replaced by $X$. The proof is the same is above except for three places. First, we used the fact that $h-1$ is surjective because it is nonconstant. This is still true because the image of a morphism must be an abelian subvariety, but $X$ is simple. Second, we used the elements $\bbar h^{n_0}-1$, but dual isogeny no longer exists in abelian varieties in general. However, we can fix an endomorphism $\varphi$ such that $\varphi \circ (h^{n_0}-1)=\deg(h^{n_0}-1)$, and use $\varphi$ in place of $\bbar h^{n_0}-1$. Third, we used the argument that if an endomorphism $\psi$ of $E$ vanishes at a point $P$ of infinite order, then $\psi=0$. This is also true for simple abelian variety $X$: the endomorphism $\psi$ must vanish on the Zariski closure of the group genearated by $P$, but it contains an abelian subvariety of $X$ of positive dimension, which has to be the whole $X$ because $X$ is simple. 

Given the referee's \thref{allcommutative}, the ineffective part of the conditional result above holds unconditionally, and it gives an unconditional proof of \thref{dynamicAV}.

\begin{remark}
When $k$ has characteristic zero, the ineffective part of \thref{dynamiccor} was proved via two different methods in \cite[Theorem 1.4]{ghioca2017orbit} and \cite[Theorem 1.2.3]{odesky}. Our proof of \thref{dynamiccor} extends the latter proof to arbitrary characteristic, and in fact the possibility of such an extension was the initial motivation for studying unit equations on quaternions in the present paper. We thank Michael Zieve for informing the author about \cite[Theorem 1.2.3]{odesky} and suggesting this possibility.
\end{remark}

\begin{remark}
If $f,g$ are endomorphisms of an elliptic curve $E$ without translation, then \thref{dynamiccor} becomes trivial. For a proof, set $P\in E(k)$ be a point in the intersection of orbits, and let $n,m>0$ be such that $f^n(P)=g^m(P)$. For any integer $N$, we have $Nf^n(P)=Ng^m(P)$, so that $(f^n-g^m)(NP)=O$ because $f,g$ are endomorphisms of $E$. But $P$ is of infinite order (otherwise the forward orbit of $P$ under $f$ would be finite), so $\ker(f^n-g^m)$ is an infinite group, and the only possibility is $f^n-g^m=0$.
\end{remark}

\section{Proof of \thref{reduction}}\label{reductionproof}

Let $\Delta$ be the set consisting of $(f,g)\in \Gamma_1\times \Gamma_2$ such that $afa'+bgb'=1$ and $\lvert 1-afa'\rvert\neq \lvert afa'\rvert$. Then the goal of \thref{reduction} is precisely to show that $\Delta$ is a finite set. 

By triangle inequality, every $(f,g)\in \Delta$ satisfies
\begin{equation}\label{eq1}
0<\Bigl\lvert \lvert afa'\rvert -\lvert bgb'\rvert \Bigr\rvert\leq 1
\end{equation}

We observe that since $\Gamma_i$ ($i=1,2$) is a semigroup generated by finitely many elements with norms greater than 1, there are only finitely many elements of $\Gamma_i$ of bounded norm.

In the rest of the proof, we will prove the claim that $\{\lvert f\rvert: (f,g)\in \Delta\}$ is bounded. Given the claim, the set $\{f:(f,g)\in\Delta\}$ is finite  by the observation above. Since $f$ determines $g$ by $g=b^{-1}(1-afa')b'^{-1}$, there are only finitely many choices for $g$ as well, and \thref{reduction} is proved. 

For contradiction, we assume that there is a solution $(f,g)\in\Delta$ with arbitrarily large $\lvert f\rvert$. Using simple calculus (specifically, Lagrange's mean value theorem), (\ref{eq1}) implies 
\begin{equation}\label{eq2}
0<\Bigl\lvert \log\lvert afa'\rvert -\log\lvert bgb'\rvert \Bigr\rvert \leq \frac{2}{\lvert afa'\rvert}
\end{equation}
for sufficiently large $\lvert f\rvert$. 

Let the semigroup $\log\lvert \Gamma_1\rvert$ be generated by $x_1,\dots,x_t>0$ and $\log\lvert \Gamma_2\rvert$ by $y_1,\dots,y_u>0$. Write $\log\lvert f \rvert=m_1x_1+\dots+m_tx_t$, $\log\lvert g\rvert=n_1y_1+\dots+n_ty_t$ for some nonnegative integers $m_i, n_j$. Let $c=\log\lvert aa'/bb'\rvert$. Then $c,x_i,y_j$ are logarithms of real algebraic numbers, and (\ref{eq2}) can be rewritten as
\begin{equation}\label{eq3}
0<\lvert c+m_1x_1+\dots+m_tx_t-n_1y_1-\dots-n_uy_u\rvert \leq \frac{2}{\lvert a\rvert e^{x_1m_1+\dots+x_tm_t}}
\end{equation}

By \thref{baker} (Baker's theorem), there are positive constants $k,C$ such that 
\[
0<\lvert a_1 c+m_1x_1+\dots+m_tx_t-n_1y_1-\dots-n_uy_u\rvert \leq k\max\{\lvert a_1\rvert,\lvert m_i\rvert,\lvert n_j\rvert\}^{-C}
\]
has no integer solution $(a_1,m_1,\dots,m_t,n_1,\dots,n_u)$. In particular, for $a_1=1$ and $m_i,n_j>0$, the inequality
\begin{equation}\label{eqbaker}
0<\lvert c+m_1x_1+\dots+m_tx_t-n_1y_1-\dots-n_uy_u\rvert \leq kH^{-C}\text{ has no solution, }
\end{equation}
where $H=\max\{1,m_1,\dots,m_t,n_1,\dots,n_u\}$.

Our next goal is to bound the right-hand side of (\ref{eq3}) by a function of $H$, in order to reach a contradiction with (\ref{eqbaker}). Since $x_i, y_j$ are positive, for $|f|$ sufficiently large and satisfying (\ref{eq3}), it is not hard to see that
\begin{equation}\label{eq4}
C_1\max\{m_i\}<\max\{n_j\}<C_2\max\{m_i\}
\end{equation}
for some $C_1, C_2>0$ that does not depend on $m_i, n_j$. For a proof, we note that
\[\min\{x_i\}\max\{m_i\}\leq m_1x_1+\dots+m_tx_t \leq t\max\{x_i\}\max\{m_i\}\]
\[\min\{y_j\}\max\{n_j\}\leq n_1y_1+\dots+n_uy_u \leq u\max\{y_j\}\max\{n_j\}\]
and (\ref{eq3}) gives
\[\frac{1}{2}(n_1y_1+\dots+n_uy_u)<m_1x_1+\dots+m_tx_t<2(n_1y_1+\dots+n_uy_u)\]
for sufficiently large $|f|$. Hence $\max\{m_i\},\max\{n_j\},\log|f|$ and $\log|g|$ are all ``comparable" to each other in the sense of (\ref{eq4})

It follows that
\begin{equation}\label{eq5}
C_1\max\{m_i\}<H\leq \max\{C_2,1\}\max\{m_i\}=:C_2'\max\{m_i\}
\end{equation}
where we denote $C_2'=\max\{C_2,1\}$.

Now (\ref{eq3}) implies 
\begin{equation}\label{eq6}
0<\lvert c+m_1x_1+\dots+m_tx_t-n_1y_1-\dots-n_uy_u\rvert \leq \frac{2}{\lvert a\rvert e^{\min\{x_i\}\max{m_i}}}\leq\frac{2}{\lvert a\rvert e^{\min\{x_i\}H/C_2'}}
\end{equation}
for sufficiently large $\lvert f\rvert$ (or equivalently, $H$, by the ``comparability" discussion above together with (\ref{eq5})).

Since the right-hand side decays exponentially in $H$, it will be less than $kH^{-C}$ for large $H$, which contradicts the lack of solution of (\ref{eqbaker}). \qed

\section{Proof of \thref{commutative}}\label{commutativeproof}

First, we observe that the equation $\lvert 1-afa'\rvert=\lvert afa'\rvert$ can be rewritten as $\lvert a^{-1}a'^{-1}-f\rvert=\lvert 0-f\rvert$. Note that $|\cdot|$ is the norm induced from the inner product on $\bbH$ with $\{1,i,j,k\}$ being an orthonormal basis. We denote the inner product by $\langle\cdot,\cdot\rangle$.  

Denoting $d=a^{-1}a'^{-1}$, the equation above gives
\begin{align*}
\langle f,f\rangle &= \lvert f\rvert^2\\
&=\lvert d-f \rvert^2\\
&=\langle d-f, d-f\rangle = \lvert d\rvert^2 - 2\langle d,f\rangle + \lvert f\rvert^2,
\end{align*}
which simplifies to $2\langle d,f\rangle = \lvert d\rvert^2$.

Hence the equation is equivalent to that $f$ lies in a hyperplane not passing through the origin, given by
\[\{x\in \bbH: \langle a^{-1}a'^{-1}, x\rangle=\frac{1}{2}\lvert a^{-1}a'^{-1}\rvert^2\}.\]

Given the observation above, \thref{commutative} follows from the following lemma:

\begin{lemma}
Let $\Gamma$ be a commutative semigroup of\/ $\bbH^\times$ generated by finitely many algebraic elements of norms greater than 1, and $H$ be a hyperplane of\/ $\bbH$ defined by
\[H=\{x\in \bbH: \Theta(x)=1\}\]
where $\Theta: \bbH\to \R$ is a nonzero $\R$-linear functional that maps $\bbH_a$ into $\bbar \Q\cap \R$. Then $\Gamma\cap H$ is finite. In fact, we have an effectively computable upper bound that depends only on $H$ and $\Gamma$ for norms of elements of\/ $\Gamma\cap H$. 
\end{lemma}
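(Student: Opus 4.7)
My plan is to reduce the problem from $\bbH$ to $\C$, using the fact that any commutative subalgebra of $\bbH$ is $\R$ or isomorphic to $\C$, and then to convert the affine-hyperplane condition into a linear form in logarithms amenable to Baker's theorem. A commutative subsemigroup $\Gamma\subset\bbH^\times$ lies inside a commutative subalgebra of $\bbH$. If $\Gamma\subset\R$, then $\R\cap H$ is empty or a single point, so the lemma is trivial. Otherwise $\Gamma$ contains some $\alpha\notin\R$ and every element of $\Gamma$ must commute with $\alpha$; the centralizer of a non-central element of $\bbH$ equals $\R[\omega]$, where $\omega:=(\alpha-\Re(\alpha))/|\alpha-\Re(\alpha)|$ is a unit pure imaginary algebraic quaternion with $\omega^2=-1$. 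The $\R$-linear map $a+b\omega\mapsto a+bi$ is an isomorphism $\R[\omega]\cong\C$ of normed $\R$-algebras (so quaternionic and complex absolute values agree), through which $\Gamma$ becomes a commutative subsemigroup of $\C^\times$ generated by finitely many algebraic $\gamma_1,\ldots,\gamma_s$ with $|\gamma_j|>1$, and $H\cap\R[\omega]$ becomes either empty or a real affine line
\[L=\{z\in\C:\Re(wz)=1\},\qquad w:=\Theta(1)-i\,\Theta(\omega)\in\bbar\Q\setminus\{0\}.\]
It then suffices to give an effective bound on $|z|$ for $z\in\Gamma\cap L$.

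\textbf{Step 2: encoding $z\in L$ as a linear form in logarithms.} For $z=\gamma_1^{n_1}\cdots\gamma_s^{n_s}\in\Gamma\cap L$, I write $wz=re^{i\psi}$; then $\Re(wz)=1$ becomes $r\cos\psi=1$, so for large $r$ the argument $\psi$ lies within $O(1/r)$ of $\sigma\pi/2\pmod{2\pi}$ for some sign $\sigma\in\{+1,-1\}$. Since $\psi\equiv\arg w+\sum_j n_j\arg\gamma_j\pmod{2\pi}$, I can pick an integer $k$ with $|k|\le C_1\max_j n_j+C_2$ so that
\[|\delta|:=\left|\arg w+\sum_{j=1}^s n_j\arg\gamma_j-2\pi k-\sigma\tfrac{\pi}{2}\right|\le\frac{C_0}{|z|}.\]
Multiplying by $2i$ and using $\log(-1)=i\pi$, the quantity $2i\delta$ rewrites as
\[\Xi:=(\log w-\log\bbar w)+\sum_{j=1}^s n_j(\log\gamma_j-\log\bbar\gamma_j)-(4k+\sigma)\log(-1),\]
a $\Z$-linear combination whose exponentials $w/\bbar w$, $\gamma_j/\bbar\gamma_j$, $-1$ are all nonzero algebraic.

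\textbf{Step 3: apply \thref{baker} and conclude.} Setting $H:=\max(1,n_1,\ldots,n_s,|4k+\sigma|)$, Baker's theorem yields either $\Xi=0$ or $|\Xi|\ge k_0 H^{-C}$ for effectively computable $k_0,C>0$. In the first case $\delta=0$, making $wz$ purely imaginary, so $\Re(wz)=0\ne 1$, contradicting $z\in L$. Hence $k_0 H^{-C}\le|\Xi|=2|\delta|\le 2C_0/|z|$, i.e.\ $|z|\le C_3 H^C$, which is polynomial in $\max_j n_j$ by the bound on $|k|$. On the other hand $|z|\ge\exp(c\max_j n_j)$ with $c:=\min_j\log|\gamma_j|>0$, so an exponential majorized by a polynomial forces an effective bound on $\max_j n_j$, and hence on $|z|$. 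Since $\Gamma$ is generated by finitely many elements of norm $>1$, only finitely many of its elements have norm below any fixed bound, finishing the proof.

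\textbf{Expected main obstacle.} The decisive step is Step 2: reexpressing the affine condition $\Re(wz)=1$ as a $\Z$-linear form in logarithms of algebraic numbers in the exact shape \thref{baker} requires. The key maneuver is the antiholomorphic symmetrization $\Lambda\mapsto\Lambda-\bbar\Lambda=2i\,\Im\Lambda$, which converts the imaginary part of a mixed expression (involving $\pi$, $\arg\gamma_j$, $\arg w$) into a bona fide $\Z$-linear combination of logs of algebraic numbers. Pleasantly, the nonvanishing $\Xi\ne 0$ needed for Baker's bound to be nonvacuous is immediate from $\Re(wz)=1\ne 0$ rather than requiring any transcendence input.
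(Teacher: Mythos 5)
Your proposal is correct and follows essentially the same route as the paper's own proof: reduce to $\C$ via the commutative (hence two-dimensional) subalgebra containing $\Gamma$, observe that the affine hyperplane condition forces the argument of the relevant complex number to be exponentially close to $\pi/2 \pmod{\pi}$, and beat that exponential decay against the polynomial lower bound from Baker's theorem. The only cosmetic divergence is your ``antiholomorphic symmetrization'' $\Lambda\mapsto\Lambda-\bar\Lambda$ in Step 2; the paper instead notes directly that the $\lambda_i = i(\tfrac{\pi}{2}+\theta),\ i\pi,\ i\theta_j$ are already purely imaginary with algebraic exponentials, so the phase inequality is itself a Baker-type form with no extra conjugation step needed.
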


\begin{proof}[Proof of lemma]
Since $\Gamma$ is commutative, it lies in a subalgebra in $\bbH$ that is isomorphic to $\C$. Passing to its restriction on this subalgebra, we may assume instead that $\Gamma$ is a semigroup generated by $g_1,\dots,g_s\in \bbar \Q^\times\subeq \C$ such that $\lvert g_j\rvert>1$, and $\Theta:\C\to \R$ is an $\R$-linear functional (which could now be zero) that maps $\bbar \Q$ into $\bbar \Q\cap \R$. We need to show that $\Theta(f)=1$ has only finitely many solutions $f\in \Gamma$. 

There is no question to ask if $\Theta=0$. In the case $\Theta\neq 0$, we may assume $\Theta$ is given by $\langle v,\cdot\rangle$ for some nonzero vector $v\in \bbar \Q$, where $\langle \cdot,\cdot\rangle$ is the standard Euclidean inner product on $\C$ with $\{1,i\}$ being an orthonormal basis. By rescaling, we may assume $\lvert v\rvert=1$, but the equation $\Theta(f)=1$ will become
\begin{equation}\label{eq7}
\langle v,f\rangle=M
\end{equation}
for some real algebraic number $M>0$.

Write $g_j=r_j v_j$ with $r_j>1$ and $v_j=e^{i\theta_j}$ on the unit circle, with $0\leq \theta_j<2\pi$. Also write $v=e^{i\theta}$ with $0\leq \theta<2\pi$. For $f=g_1^{n_1}\dots g_s^{n_s}$, the equation (\ref{eq7}) becomes
\begin{equation}\label{eq8}
\langle v,e^{i(n_1\theta_1+\dots +n_s\theta_s)}\rangle = Mr_1^{-n_1}\dots r_s^{-n_s}
\end{equation}

The left-hand side involves the inner product of two unit vectors, so its value is $\cos((n_1\theta_1+\dots +n_s\theta_s)-\theta)$. When $n_i$ are sufficiently large, the right-hand side of \ref{eq8} is small. But $\lvert\cos((n_1\theta_1+\dots +n_s\theta_s)-\theta)\rvert$ is approximately the closest distance from $(n_1\theta_1+\dots +n_s\theta_s)-\theta$ to $(m+1/2)\pi$ for integer $m$. If (\ref{eq8}) is satisfied by infinitely many $(n_j)$'s, then for sufficiently large solutions $(n_j)$, we have
\begin{equation}\label{eq9}
0<\left\lvert\left(\frac{\pi}{2}+\theta\right)+m\pi-(n_1\theta_1+\dots +n_s\theta_s)\right\rvert<2Mr_1^{-n_1}\dots r_s^{-n_s}
\end{equation}
for some $m\in \Z$.

By assumption, $v, v_j$ are algebraic numbers, so $\lambda:=i(\frac{1}{2}\pi+\theta)$, $\mu=i\pi$ and $\lambda_j=i\theta_j$ are logarithms of algebraic numbers. By \thref{baker}, there are constants $k,C>0$ such that the inequality 
\begin{equation}\label{eq10}
0<\left\lvert\left(\frac{\pi}{2}+\theta\right)+m\pi-(n_1\theta_1+\dots +n_s\theta_s)\right\rvert<kB^{-C} \text{ has no solution}
\end{equation}
for $m, n_j\in \Z, n_j\geq 0$, where
\[B=\max\{1,|m|,n_j\}\]

But for solutions of (\ref{eq9}) with $n_j$ large, $m\pi$ must be close to $n_1\theta_1+\dots +n_s\theta_s-(\frac{1}{2}\pi+\theta)$. Noting that
\[n_1\theta_1+\dots +n_s\theta_s\leq s\max\{\theta_j\}\max\{n_j\},\]
we have
\begin{equation}\label{eq11}
\lvert m\rvert\leq C'\max\{n_j\}
\end{equation}
for some constant $C'$, and thus
\begin{equation}\label{eq12}
\max\{n_j\}\leq B=\max\{n_j,\lvert m\rvert\}\leq \max\{1,C'\}\max\{n_j\}
\end{equation}

It follows from (\ref{eq10}) that for some constant $k'>0$, 
\begin{equation}\label{eq13}
0<\left\lvert\left(\frac{\pi}{2}+\theta\right)+m\pi-(n_1\theta_1+\dots +n_s\theta_s)\right\rvert<k'\max\{n_j\}^{-C} \text{ has no solution}
\end{equation}
for $m,n_j\in \Z, n_j\geq 0$. But for $(n_j)$ large, $2Mr_1^{-n_1}\dots r_s^{-n_s}<k'\max\{n_j\}^{-C}$, yielding a contradiction with (\ref{eq9}). 
\end{proof}

\section{Future Work}\label{discussion}

We were able to arrive at the main theorem using the archimedean norm only.  If we can furthermore use some version of $p$-adic norm on the division algebra $A$, we can vastly improve the result by applying K. Yu's theorem about $p$-adic logarithms in \cite{yu2007p}. One possible proposal for a $p$-adic norm is to use the reduced norm of a division algebra over $\Q_p$, which only works if $A\otimes \Q_p$ is still a division algebra. Unfortunately, for each given $A$, this only holds for finitely many $p$. 

\thref{reduction} is potentially useful for more cases than in \thref{main}. For example, one can explore the analogue of \thref{commutative} in the case where $\Gamma$ has two or more noncommutative generators, and then apply \thref{reduction}. Even if $\Gamma$ is replaced by its subset $\{f_1^{n_1}f_2^{n_2}:n_1,n_2\geq 0\}$, where $f_1,f_2$ are noncommutative generators with norms greater than 1, the analogue of \thref{commutative} remains open. 

The following example shows that we should only consider \thref{conjecture} where $A$ is a division algebra. 

\begin{example}\thlabel{matrixexample}
Take $A=M_2(\Q)$, the algebra of $2\times 2$ matrices over $\Q$. Then the multiplicative semigroup generated by $\begin{bmatrix} 1 & 1\\ 0 & 1 \end{bmatrix}$ is 
\[\Gamma:=\left\{\begin{bmatrix}
 1&n\\0&1
\end{bmatrix}: n\in \Z, n\geq 0 \right\}.\] 
The equation $2f-g=1_A$ has infinitely many solutions $f,g\in \Gamma$, namely all $(f,g)$ with $f\in\Gamma$ and $g=f^2$. 
\end{example}

\section*{Appendix}
\renewcommand{\thesection}{\Alph{section}}
\stepcounter{section}
\setcounter{section}{1}
This section contains the proofs of \thref{allcommutative} and \thref{virtualabelian}, both sketched by the referee. We start with an observation that will be used in both proofs.

\begin{lemma}\thlabel{embed}
Let $A$ be a finite-dimensional division algebra over a field $k$, and let $K$ be the algebraic closure of $k$. Then there is an embedding of $k$-algebras from $A$ to the matrix algebra $M_n(K)$ for some integer $n>0$.
\end{lemma}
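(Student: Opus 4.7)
The plan is to use the left regular representation of $A$. Setting $n = \dim_k A$ (finite by hypothesis), I would define $\rho : A \to \End_k(A)$ by $\rho(a)(x) = ax$. Since multiplication in $A$ is associative and $k$-bilinear, $\rho$ is a $k$-algebra homomorphism; it is injective because $\rho(a)(1_A) = a$, so $\rho(a) = 0$ forces $a = 0$. Choosing a $k$-basis of $A$ then identifies $\End_k(A)$ with $M_n(k)$, and the inclusion $k \hookrightarrow K$ extends to $M_n(k) \hookrightarrow M_n(K)$; composing these produces the desired embedding $A \hookrightarrow M_n(K)$.

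I do not anticipate any real obstacle: the argument actually proves a strictly stronger statement than the one requested, since neither the algebraic closedness of $K$ nor the division-algebra hypothesis on $A$ is used --- only that $A$ is unital and finite-dimensional over $k$. If one wanted a more economical embedding (say, of dimension equal to the reduced degree $\sqrt{[A:Z(A)]}$ rather than $\dim_k A$), one would instead invoke Artin--Wedderburn for the semisimple $k$-algebra $A \otimes_k K$ and project onto a simple factor; but for the stated conclusion this refinement is unnecessary.
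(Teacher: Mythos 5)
Your proof is correct, and it takes a genuinely different and more elementary route than the paper. The paper embeds $A$ into $A\otimes_L K$, where $L$ is the center of $A$; since $A$ is a central simple $L$-algebra and $K$ (being the algebraic closure of $k$, hence of the finite extension $L$) splits it, $A\otimes_L K\cong M_d(K)$ with $d=\sqrt{[A:L]}$. Your left regular representation argument avoids central simple algebra theory entirely, works for any finite-dimensional unital $k$-algebra, and requires neither the division hypothesis nor the algebraic closedness of $K$ --- a strictly more general statement, at the cost of a larger $n$. One caveat on your closing remark: the more economical embedding should be obtained by tensoring over the center $L$ (as the paper does), not over $k$, because $A\otimes_k K$ can fail to be semisimple when $L/k$ is inseparable (e.g.\ $k=\mathbb F_p(t)$, $L=k(t^{1/p})$: then $L\otimes_k K$ has nilpotents). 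This does not affect your main argument, which is complete as written.
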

\begin{proof}
Let $L$ be the center of $A$. Then $L$ is a finite extension of $k$ and we can embed $A$ into $A\otimes_L K$, which is isomorphic to the matrix algebra $M_n(K)$ for some integer $n$ by a standard fact about central simple algebras.
\end{proof}

\begin{proof}[Proof of \thref{allcommutative}]
Let $K$ be the algebraic closure of $k$ and fix an embedding $A\incl M_n(K)$ as in \thref{embed}. Note that nonzero elements of $A$ are sent to invertible matrices in $M_n(K)$. From now on, we shall consider the unit equation in $M_n(K)$. 

To set up a proof by contradiction, we assume that
\begin{equation}\label{shortest}
a_1f_1b_1+\dots+a_mf_mb_m=1, (f_1,\dots,f_m)\in \Gamma_1\times\dots \times \Gamma_m
\end{equation}
is a shortest equation (i.e., with minimal $m$) in the setting of \thref{allcommutative} that has infinitely many degenerate solutions. We claim:

\begin{equation}\label{claim1}
\begin{aligned}
&\text{There cannot exist an infinite family of degenerate solutions }\\ &\{(f_1^\alpha,\dots,f_m^\alpha)\} \text{ indexed by }\alpha\text{ in an infinite set, such that }f_1^\alpha\\&\text{are the same for all }\alpha.
\end{aligned}
\end{equation}

Otherwise, call $f_1^\alpha=f_1$, and let $u=a_1f_1b_1$, which is not $1$ because the solution is nondegenerate. Then $1-u$ is a unit in $A$ because $A$ is a division algebra, and set $b'_i=b_i (1-u)^{-1}$. The following equation
\[a_2f_2b'_2+\dots+a_mf_mb'_m=1\]
has infinitely many nondegenerate solutions $(f_2,\dots,f_m)=(f_2^\alpha,\dots,f_m^\alpha)$, contradicting the minimality of $m$. 

Now note that every element $\gamma$ of $\Gamma_i$ is diagonalizable in $M_n(K)$. Indeed, since $\gamma\in R$ and $R$ is finite-dimensional over $k$, we see that $\gamma$ satisfies some minimal polynomial $p(\gamma)=0$ where $p(x)\in k[x]$ is monic. To show that $\gamma$ is diagonalizable, it suffices to show that $p(x)$ has no repeated root in $K$. Assume the contrary, then there is a proper divisor $p_0(x)\in k[x]$ of $p(x)$ such that $p(x)$ divides $p_0(x)^2$. Thus $p_0(\gamma)^2=0$, so that $p_0(\gamma)=0$ because $R$ is a division algebra. This is a contradiction to the minimality of $p(x)$.

Since $\Gamma_i$ is abelian and finitely generated, and every element of $\Gamma_i$ is diagonalizable, there is a simultaneous diagonalization of $\Gamma_i$ by some $s_i\in GL_n(K)$, i.e., $s_i\Gamma_i s_i^{-1}$ only consists of diagonal matrices in $M_n(K)$. So we may replace $a_i$ by $a_i s_i^{-1}$, $b_i$ by $s_i b_i$, and $\Gamma_i$ by $s_i\Gamma_i s_i^{-1}$ and assume that each $\Gamma_i$ only consists of diagonal matrices in $M_n(K)$ (though $\Gamma_i, a_i, b_i$ are no longer inside $R$, but it will not matter). 

Now consider a solution $(f_i)$ of $1=a_1 f_1 b_1+\dots+a_m f_m b_m$, where $f_i=\diag(x_{i1},\dots,x_{in})$. Looking at the $(1,1)$-entry, we obtain an equation of the form 
\begin{equation}\label{entrywise}
1=\sum_{1\leq i,j\leq n} p_{ij} x_{ij}
\end{equation}
for some fixed $p_{ij}\in K$. (To see it, one merely needs to notice that every entry of $a_1 f_1 b_1+\dots+a_m f_m b_m$ is a linear combination of entries of $f_i$. )

Let $S$ be the set consisting of all $(i,j)$ such that $p_{ij}\neq 0$. We may assume that $S$ is nonempty, otherwise \eqref{entrywise} has no solution and there is nothing to prove. We claim that
\begin{equation}\label{claim2}
\begin{aligned}
&\text{There are finite sets $X_{ij}$ for $(i,j)\in S$, such that whenever $(x_{ij})$ is} \\
&\text{a solution of \eqref{entrywise}, there exists $(i_0,j_0)\in S$ such that $x_{i_0 j_0}\in X_{i_0 j_0}.$}\\
\end{aligned}
\end{equation}

To prove the claim, notice that there is a finitely generated subgroup of $K^\times$ that contains all $x_{ij}$ because $\Gamma_i$ is finitely generated. By the $S$-unit theorem in several variables \cite[Theorem 6.1.3]{evertse2015unit}, for every nonempty subset $T$ of $S$, the equation
\begin{equation}\label{subsum}
1=\sum_{(i,j)\in T} p_{ij} x_{ij}
\end{equation}
has only finitely many nondegenerate solutions. Let $X_{ij}$ be the set consisting of all $x_{ij}$ that appears in a nondegenerate solution of \eqref{subsum} for some $T$. Now for every solution $(x_{ij})$ of \eqref{entrywise}, there is a minimal nonempty subset $T_0$ of $S$ such that $1=\sum_{(i,j)\in T_0} p_{ij} x_{ij}$. Pick $(i_0,j_0)\in T_0$, then by construction, $x_{i_0 j_0}\in X_{i_0 j_0}$, as required.

Now applying the pigeonhole principle to the infinitely many solutions of \eqref{shortest}, we see that there exists $(i_0,j_0)\in S$ and $x\in X_{i_0 j_0}$ such that there are infinitely many solutions $(f_i)=(\diag(x_{ij}))$ with $x_{i_0 j_0}=x$. Without loss of generality, assume $(i_0,j_0)=(1,1)$. We claim that all those solutions $(f_i)$ have the same $f_1$. This contradicts \eqref{claim1} and finishes the proof of \thref{allcommutative}.

It remains to prove the claim. Let $(f_i)=\diag(x_{i1},\dots,x_{in})$ and $(f'_i)=\diag(x'_{i1},\dots,x'_{in})$ be two solutions of \eqref{shortest} such that $x_{11}=x'_{11}=x$. Then $g:=f_1 (f'_1)^{-1}$ is a diagonal matrix with $(1,1)$-entry being $1$. In particular, $g-1$ is not invertible in $M_n(K)$. But by construction, $h:=s_1^{-1}(g-1)s_1$ is in $A$, so $h$ is either zero or a unit. Since $g-1$ is not invertible, neither is $h$. Therefore $h=0$, so that $g=1$, which gives $f_1=f'_1$. 
\end{proof}

We next prove a statement that implies a generalization of \thref{allcommutative} where all $\Gamma_i$ are semigroups not containing free semigroups of rank two. Recall that a group $G$ is \emph{virtually} $P$ (where $P$ is a property) if $G$ has a finite-index subgroup that is $P$. 

\begin{proposition}\thlabel{virtualabelian}
Let $A$ be a division algebra over a field $k$, and $\Gamma$ be a finitely generated semigroup of $A^\times$. If $\Gamma$ does not contain a free semigroup of rank two, then the group $G$ generated by $\Gamma$ in $A^\times$ is virtually abelian. 
\end{proposition}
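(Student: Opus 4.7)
The plan is to combine the Tits alternative for linear groups with a nilpotency argument exploiting the fact that $A$ has no nonzero nilpotent elements.

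First, using \thref{embed}, I would fix a $k$-algebra embedding $A\incl M_n(K)$ with $K$ the algebraic closure of $k$. Since $A$ is a division algebra, every nonzero element of $A$ becomes an invertible matrix, so $\Gamma$ sits inside $GL_n(K)$ as a finitely generated subsemigroup and $G\subeq GL_n(K)$ is a finitely generated linear group; this brings the classical theory of linear groups to bear.

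Next I would show that $G$ must be virtually solvable. A standard fact about algebraic groups says that a Zariski-closed subsemigroup of $GL_n(K)$ is automatically a subgroup (via a Noetherianity argument on the descending chain $\Gamma\supeq h\Gamma\supeq h^2\Gamma\supeq\cdots$), so the Zariski closure of $\Gamma$ coincides with that of $G$, call it $\bbar G$; in particular, $\Gamma$ is Zariski-dense in $\bbar G$. If $\bbar G$ were not virtually solvable, then a Zariski-dense form of the Tits alternative (going back to Tits' original ping-pong argument, with refinements by Breuillard and others) would allow me to pick two elements $\alpha,\beta\in \Gamma$ that freely generate a free subgroup of rank two inside $\bbar G$. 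But two elements that freely generate a free group also freely generate a free subsemigroup, so this would produce a free subsemigroup of rank two inside $\Gamma$, contradicting the hypothesis. I expect this Zariski-dense Tits alternative — making sure the ping-pong players are themselves elements of $\Gamma$ rather than arbitrary words in generators of $G$ — to be the main obstacle of the argument.

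Finally I would upgrade ``virtually solvable'' to ``virtually abelian'' using the division-algebra structure of $A$. Let $G_0\subeq G$ be a finite-index solvable subgroup; after intersecting with the identity component of its Zariski closure, I may assume this closure is connected and solvable. The Lie-Kolchin theorem then puts $G_0$ into upper-triangular form over $K$, so every element of $[G_0,G_0]$ is upper-triangular with $1$'s on the diagonal, i.e., unipotent in $M_n(K)$. Now if such a matrix $I+N$ is the image of some $a\in A^\times$, then $a-1\in A$ maps to the nilpotent matrix $N$, so $(a-1)^n$ maps to $0$, and by injectivity of $A\incl M_n(K)$ we get $(a-1)^n=0$ in $A$; since $A$ is a division algebra and hence has no nonzero nilpotent element, $a=1$. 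Therefore $[G_0,G_0]=\{1\}$, so $G_0$ is abelian and $G$ is virtually abelian, as desired.
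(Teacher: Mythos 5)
Your outline matches the paper's proof in all its essential structure: embed $A\incl M_n(K)$ via \thref{embed}, invoke a semigroup/Zariski-dense form of the Tits alternative to show the group $G$ generated by $\Gamma$ is virtually solvable (the paper actually gets virtually nilpotent, but that makes no difference downstream), and then upgrade to virtually abelian by Lie--Kolchin plus the observation that $A$ has no nontrivial unipotents because a division algebra has no nonzero nilpotent elements. Your final paragraph is, up to phrasing, the same as the paper's auxiliary lemma. Your observation that the Zariski closure of $\Gamma$ is automatically a group, so $\bbar\Gamma=\bbar G$, is a correct and nice remark (via the descending chain $\bbar\Gamma\supeq x\bbar\Gamma\supeq x^2\bbar\Gamma\supeq\cdots$), though the paper does not need to state it explicitly.

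The one place you are waving your hands is exactly the place you flag as the ``main obstacle'': you need a Tits alternative that produces two free generators \emph{inside the semigroup $\Gamma$}, not merely inside the group $G$ it generates. The classical Tits alternative for linear groups does not do this. The result you want is precisely Theorem~1 of Okni\'nski--Salwa, \emph{Generalised Tits alternative for linear semigroups} (J.~Pure Appl.\ Algebra 103 (1995), 211--220), which is what the paper cites. It says that a finitely generated linear semigroup over a finitely generated field either contains a free noncommutative subsemigroup or generates a virtually nilpotent group. Two small points of bookkeeping if you go this route: (i) that theorem is stated over a finitely generated field, so one first passes to the finitely generated subfield $L\subeq K$ generated by the matrix entries of the (finitely many) generators of $\Gamma$; and (ii) you then get virtually nilpotent directly, so the intermediate ``free subgroup $\Rightarrow$ free subsemigroup'' step (which is fine, but an extra reduction) is not needed. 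With that citation in place, your argument closes and is the paper's argument.
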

\begin{proof}
First, we will show that $G$ is virtually nilpotent using Theorem 1 of \cite{okninskisalwa1995}. Embed $A$ into $M_n(K)$ as in \thref{embed}, where $K$ is the algebraic closure of $k$. Note that $\Gamma$ is in $GL_n(K)$. 

Since $\Gamma$ is finitely generated, there is a finitely generated subfield $L\subeq K$ such that $\Gamma\subeq GL_n(L)$ (for example, let $L$ be the field generated by matrix entries of generators of $\Gamma$ over the prime field $\Q$ or $\Fp$ of $k$). Now, Theorem 1 of \cite{okninskisalwa1995} implies that $G$ is virtually nilpotent.

Let $N$ be a finite-index nilpotent subgroup of $G$. By the following lemma, $N$ is virtually abelian, and the proof is complete.  
\end{proof}

\begin{warning}
Here $L$ may not contain $k$, but it does not matter for the purpose of this proof.
\end{warning}

\begin{lemma}
If $A$ is a division algebra over a field $k$, and $N\subeq A^\times$ is a solvable subgroup, then $N$ is virtually abelian.
\end{lemma}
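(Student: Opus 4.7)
The plan is to reduce to matrices over an algebraically closed field via \thref{embed}, then combine a classical triangularization theorem for solvable linear groups with the fact that a division algebra contains no nontrivial nilpotent elements.

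First, by \thref{embed}, fix an embedding of $k$-algebras $A\incl M_n(K)$ where $K$ is the algebraic closure of $k$. This sends $A^\times$ into $GL_n(K)$, so $N$ may be viewed as a solvable subgroup of $GL_n(K)$. I would then invoke Mal'cev's theorem on solvable linear groups: any solvable subgroup of $GL_n(K)$ over an algebraically closed field $K$ contains a subgroup of finite index that is triangularizable, i.e.\ simultaneously conjugate (inside $GL_n(K)$) into the group of upper triangular matrices. Let $N'\subeq N$ be such a finite-index subgroup, and after conjugating in $GL_n(K)$, assume $N'$ consists of upper triangular matrices. (Note the conjugation need not preserve $A$, but this will not matter for what follows.)

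Next, I would observe that the commutator subgroup $[N',N']$ consists of unipotent matrices. Indeed, a commutator of two upper triangular matrices has $1$'s on the diagonal, hence is unipotent. The key step is then to show that $N$ contains no nonidentity unipotent element, using only that $A$ is a division algebra. Suppose $u\in N$ is unipotent as an element of $GL_n(K)$; then $u-1$ is nilpotent in $M_n(K)$, say $(u-1)^r=0$ for some $r\geq 1$. Since $u\in A^\times\subeq A$, we have $u-1\in A$. In the division algebra $A$, the element $u-1$ is either zero or a unit. If it were a unit, then $(u-1)^r=0$ would force $(u-1)^{r-1}=0$ by multiplying by $(u-1)^{-1}$, and inductively $u-1=0$; so in either case $u=1$. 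Therefore every unipotent element of $N$ is trivial, and in particular $[N',N']=\{1\}$.

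This shows $N'$ is abelian, and since $N'$ has finite index in $N$, the group $N$ is virtually abelian, completing the proof. The main thing to be careful about is the citation and scope of Mal'cev's triangularization theorem in the infinite-group setting (not just the Lie--Kolchin theorem for connected algebraic groups); everything else is routine once that is in hand.
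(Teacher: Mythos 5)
Your proof is correct, and the core argument is identical to the paper's: find a finite-index subgroup of $N$ that becomes upper triangular after conjugation in $GL_n(K)$, observe that its commutator subgroup consists of unipotent matrices, and conclude from the division algebra structure of $A$ that all unipotent elements in $N$ are trivial (since $u-1$ is nilpotent but must be zero or a unit). The only difference is how the finite-index triangularizable subgroup is produced. You cite Mal'cev's theorem on solvable linear groups directly as a black box, whereas the paper essentially re-derives the needed instance of it: it passes to the Zariski closure $\overline{N}$ in $GL_n(K)$ (still solvable), takes the identity component $\overline{N}_0$ (finite index), intersects with $N$ to get $N_0$, and then applies Lie--Kolchin to the connected solvable algebraic group $\overline{N}_0$. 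Both routes are standard; the paper's is more self-contained and relies only on the classical Lie--Kolchin theorem for connected algebraic groups, while yours is shorter at the cost of invoking the stronger (though well-known) Mal'cev statement. Your cautionary note at the end about scoping Mal'cev carefully is exactly the right concern to raise; the paper sidesteps it by the explicit Zariski-closure reduction.
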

\begin{proof}
Again, we embed $A$ into $M_n(K)$ as in \thref{embed}, where $K$ is the algebraic closure of $k$. Then $N$ is a subgroup of $GL_n(K)$. The Zariski closure $\bbar N$ of $N$ in $GL_n(K)$ is a $K$-algebraic group that is still solvable, and so is its identity component $\bbar N_0$. Let $N_0:=\bbar N_0\cap N$. Since $\bbar N_0$ has finite index in $\bbar N$, so does $N_0$ in $N$. 

We claim that $N_0$ is abelian. By the Lie--Kolchin triangularization theorem \cite[Theorem 6.3.1]{springer1998linear}, there is $s\in GL_n(K)$ such that $s\bbar N_0 s^{-1}$ consists of upper triangular matrices, so $sN_0s^{-1}$ does as well. We observe that if $a$ and $b$ are invertible upper triangular matrices, then the commutator $[a,b]:=aba^{-1}b^{-1}$ is in $U$, the group of upper triangular matrices with diagonal entries all $1$. It follows that $s[N_0,N_0]s^{-1}\subeq U$.

It remains to show that $[N_0,N_0]$ is the trivial group. Take $x\in [N_0,N_0]$, and note that $x\in A$, so $x-1\in A$ is either zero or invertible. But $s(x-1)s^{-1}=sxs^{-1}-1$ is an upper triangular matrix in $Mat_n(K)$ with diagonal entries all $0$, so $x-1$ cannot be invertible. It follows that $x=1$ and $N_0$ is abelian.
\end{proof}



\begin{thebibliography}{10}

\bibitem{baker1993logarithmic}
A.~Baker and G.~W{\"u}stholz.
\newblock Logarithmic forms and group varieties.
\newblock {\em J. Reine Angew. Math.}, 442:19--62, 1993.

\bibitem{bell2016dynamical}
J.~P. Bell, D.~Ghioca, and T.~J. Tucker.
\newblock {\em The Dynamical {M}ordell--{L}ang Conjecture}.
\newblock American Mathematical Society, Providence, RI, 2016.

\bibitem{derksen2007skolem}
H.~Derksen.
\newblock A {S}kolem--{M}ahler--{L}ech theorem in positive characteristic and finite automata.
\newblock {\em Invent. Math.}, 168(1):175--224, 2007.

\bibitem{evertse2015unit}
J.-H. Evertse and K.~Gy\H{o}ry.
\newblock {\em Unit Equations in {Diophantine} Number Theory}.
\newblock Cambridge University Press, Cambridge, 2015.

\bibitem{ghioca2019positive}
D.~Ghioca.
\newblock The dynamical {M}ordell--{L}ang conjecture in positive characteristic.
\newblock {\em Trans. Amer. Math. Soc.}, 371(2):1151--1167, 2019.

\bibitem{ghioca2017orbit}
D.~Ghioca and K.~D. Nguyen.
\newblock The orbit intersection problem for linear spaces and semiabelian varieties.
\newblock {\em Math. Res. Lett.}, 24(5):1263--1283, 2017.

\bibitem{ghioca2011mordell}
D.~Ghioca, T.~Tucker, and M.~E. Zieve.
\newblock The {M}ordell--{L}ang question for endomorphisms of semiabelian varieties.
\newblock {\em J. Th\'{e}or. Nombres Bordeaux}, 23(3):645--666, 2011.

\bibitem{ghioca2012linear}
D.~Ghioca, T.~J. Tucker, and M.~E. Zieve.
\newblock Linear relations between polynomial orbits.
\newblock {\em Duke Math. J.}, 161(7):1379--1410, 2012.

\bibitem{hindry2013diophantine}
M.~Hindry and J.~H. Silverman.
\newblock {\em Diophantine Geometry: an Introduction}.
\newblock Springer-Verlag, New York, 2013.

\bibitem{koymans2017equation}
P.~Koymans and C.~Pagano.
\newblock On the equation ${X_1 + X_2 = 1}$ in finitely generated multiplicative groups in positive characteristic.
\newblock {\em Q. J. Math.}, 68(3):923--934, 2017.

\bibitem{lang1960integral}
S.~Lang.
\newblock Integral points on curves.
\newblock {\em Publications Math{\'e}matiques de l'IH{\'E}S}, 6:27--43, 1960.

\bibitem{moosascanlon2004}
R.~Moosa and T.~Scanlon.
\newblock {$F$}-structures and integral points on semiabelian varieties over finite fields.
\newblock {\em Amer. J. Math.}, 126(3):473--522, 2004.

\bibitem{odesky}
A.~Odesky.
\newblock {\em The orbit intersection problem and polynomial functions}.
\newblock PhD Thesis, the University of Michigan--Ann Arbor, 2020.

\bibitem{okninskisalwa1995}
J.~Okni\'{n}ski and A.~Salwa.
\newblock Generalised {T}its alternative for linear semigroups.
\newblock {\em J. Pure Appl. Algebra}, 103(2):211--220, 1995.

\bibitem{springer1998linear}
T. A.~Springer.
\newblock {\em Linear algebraic groups}.
\newblock Second edition. Birkh\"{a}user Boston, Inc., Boston, MA, 1998.

\bibitem{voloch1998}
J. F.~Voloch.
\newblock The equation {$ax+by=1$} in characteristic {$p$}.
\newblock {\em J. Number Theory}, 73(2):195--200, 1998.

\bibitem{yu2007p}
K.~Yu.
\newblock $p$-adic logarithmic forms and group varieties {III}.
\newblock {\em Forum Math.}, 19(2):187--280, 2007.

\end{thebibliography}
\end{document}